\title{A weighted minimum gradient problem with complete electrode model boundary conditions
 for conductivity imaging\thanks{Received by the editors 2015.}}
\author{Adrian Nachman\thanks{Department of Mathematics,
        Edward S Roger Sr. Department of Electrical and Computer Engineering, and Institute of Biomaterials and Biomedical Engineering, University of Toronto,
        Toronto, Ontario, Canada ({\tt nachman@math.toronto.edu}).}\and
        Alexandru Tamasan\thanks{Department of Mathematics, University
        of Central Florida, Orlando, FL, USA ({\tt tamasan@math.ucf.edu}).}
        \and Johann Veras\thanks{ Lockheed Martin Co., Sand Lake Rd, MP 450, Orlando, FL, 32819, USA ({\tt Jveras@knights.ucf.edu}).}}
\begin{document}

\maketitle

\begin{abstract}
We consider the inverse  problem of recovering an
isotropic electrical conductivity from interior knowledge of the magnitude of
one current density field generated by applying current on a set of electrodes. The required interior data can be obtained by means of MRI measurements. On the boundary we only require knowledge of the electrodes, their impedances, and the corresponding average input currents. From the mathematical point of view, this practical question leads us to consider a new weighted minimum gradient problem for functions satisfying the boundary conditions coming from the Complete Electrode Model (CEM) of Somersalo, Cheney and Isaacson. We show that this variational problem has non-unique solutions. The surprising discovery is that the physical data is still sufficient to determine the geometry of (the connected components of) the level sets of the minimizers. We thus obtain an interesting phase retrieval result: knowledge of the input current at the boundary allows determination of the full current vector field from its magnitude.
We characterize locally the non-uniqueness in the variational problem.
In two and three dimensions we also show that additional measurements of the voltage potential along a curve joining the electrodes yield unique determination of the conductivity. The proofs involve a maximum principle and a new regularity up to
the boundary result for the CEM boundary conditions. A nonlinear algorithm is proposed and implemented to illustrate the theoretical results.
\end{abstract}

\begin{keywords}  minimum gradient, conductivity imaging, complete electrode model, current density impedance imaging, minimal surfaces, magnetic resonance electrical impedance tomography, current density impedance imaging
\end{keywords}

\begin{AMS}
35R30, 35J60, 31A25, 62P10
\end{AMS}

\pagestyle{myheadings} \thispagestyle{plain} \markboth{A. Nachman, A. Tamasan and J. Veras}{
A weighted minimum gradient problem in conductivity imaging}
\section{Introduction}
\label{intro}
We consider the inverse problem of reconstructing an inhomogenuous isotropic
electrical conductivity $\sigma$ in a domain $\Omega\subset\mathbb{R}^n$, $n\geq2$, from interior knowledge of the magnitude $a$ of one
current density field and of corresponding boundary data.

Most of the existing results on this problem (see a brief survey of previous work at the end of this introduction) consider Dirichlet boundary conditions. In this paper we study boundary conditions which model what can actually be measured in practical experiments.  We work with the beautiful Complete Electrode Model (CEM) originally introduced in \cite{somersaloCheneyIsaacson} and shown to best describe the physical data: For $k=0,...,N$, let $e_k\subset \partial\Omega$ denote the surface electrode of constant impedance $z_k$ through which one injects a net current $I_k$.

The CEM assumes the voltage potential $u$ inside and the constant voltages $U_k$'s on the surface of the electrodes distribute according to the boundary value problem
\begin{align}
&\nabla\cdot\sigma\nabla u=0, \quad\text{in}\;\Omega,\label{conductivity_eq}\\
&u+z_k\sigma\frac{\partial u}{\partial\nu}= U_k\quad \text{on}\; e_k,\; \;\text{for}\; k=0,...,N,\label{robin_kForward} \\
&\int_{e_k} \sigma\frac{\partial u}{\partial\nu}ds = I_k,\quad \text{for}\; k=0,...,N,\label{inject_kForward}\\
&\frac{\partial u}{\partial\nu}=0, \quad\text{on}\; \partial\Omega\setminus\bigcup_{k=0}^{N}e_k, \label{no_flux_off_electrodesForward}
\end{align}where $\nu$ is the outer unit normal.
If a solution  exists, an integration of \eqref{conductivity_eq} over $\Omega$ together with \eqref{inject_kForward} and \eqref{no_flux_off_electrodesForward} show that
\begin{align}\label{sum0inject}
\sum_{k=0}^N I_k=0
\end{align}
is necessary. Physically, the zero sum of the boundary currents account for the absence of sources/sinks of charges.
The constants $U_k$ appearing in \eqref{robin_kForward} represent \emph{unknown} voltages on the surface of
the electrodes, and the difference from  the traces $u|_{e_k}$
of the interior voltage potential governs the flux of the current through the skin to the electrode.
We refer to the problem \eqref{conductivity_eq}, \eqref{robin_kForward},
\eqref{inject_kForward}, and \eqref{no_flux_off_electrodesForward} as the \emph{forward problem}.

Under the assumptions that  $\Omega$ is a Lipschitz domain, the conductivity is essentially bounded
with real part bounded away from zero, the electrodes $e_k$ are (relatively) open connected subsets
of $\partial\Omega$ whose closure are disjoint,
the impedances $z_k$ have positive real part, and the injected currents $I_k$ satisfy \eqref{sum0inject}, the forward problem has a unique solution
$(u; \langle U_0,...,U_N\rangle)\in H^1(\Omega)\times \mathbb{C}^{N+1}$ up to an additive constant,
as shown in \cite{somersaloCheneyIsaacson}. We normalize this constant by imposing the electrode voltages ${U}=\langle U_0,...,U_N\rangle$ to lie in the hyperplane
\begin{align}\label{normalization}
\Pi:=\{U\in\mathbb{R}^{N+1}:\;U_0+...+U_N=0\}.
\end{align}
The net input currents $I_k$, $k=0,...,N$ as in \eqref{sum0inject}, generate a  current density field $J=-\sigma\nabla u$, where $(u,U)\in H^1(\Omega)\times \Pi$ is the solution of the forward problem.

In this paper we consider the inverse problem of determining $\sigma$, given the magnitude
\begin{align}\label{a}
a=|\sigma \nabla u|
\end{align}
of the current density field inside $\Omega$.

The conductivity $\sigma$ is \emph{unknown} but assumed real valued
 and satisfying
\begin{align}\label{lowerbound_sigma}
\text{essinf}_{\Omega} \sigma(x)>0.
\end{align}

Each electrode $e_k\subset\partial\Omega$, $k=0,...,N,$ is a known Lipschitz domain subset of the boundary. The surface impedances $z_k$ are assumed real valued. In general we allow them to be inhomogenous functions on the electrodes satisfying
\begin{align}\label{common_lower_impedance}
\text{essinf}_{e_k}{z_k} >0,\quad\text{for }k=0,...,N.
\end{align}
Further smoothness conditions will be assumed for some of the  results in this paper.

We note that, in practice, interior measurements  of all three components of the current density $J$ can be obtained from three  magnetic resonance scans involving two rotations of the object \cite{joy}. However recent engineering advances in ultra-low field magnetic resonance may be used to recover $J$ without rotating the object \cite{nieminenETal}. We hope that the results presented here may lead to further experimental progress on easier ways to measure directly just the magnitude of the current.

We start by remarking that there is non-uniqueness in the inverse problem stated above, as can be seen
in the following example: Let $\Omega=(0,1)\times(0,1)$  be the unit square. We inject
the current $I_1=1$ through the top electrode $e_1=\{(x,1): \;0\leq x\leq 1\}$ of impedance $z_1>0$, ``extract''
the current $I_0=-1$ through the bottom electrode $e_0=\{(0,x):\;0\leq x\leq 1\}$ of impedance $z_0=z_1+1$,
and measure the magnitude $a\equiv 1$ of the current density field in $\Omega$.
Then, for every $\varphi:[0,1]\to [\varphi(0),\varphi(1)]$ an increasing Lipschitz continuous function,
satisfying $\varphi(0)+\varphi(1)=1$, the function $u_\varphi(x,y):=\varphi(y)$ solves the forward problem
\eqref{conductivity_eq}, \eqref{robin_kForward}, \eqref{inject_kForward}, and \eqref{no_flux_off_electrodesForward}
corresponding to a conductivity $\sigma_\varphi(x,y)=1/\varphi'(y)$, yet the magnitudes of the corresponding current densities yield the same interior measurements $\sigma|\nabla u|=\sigma_\varphi|\nabla u_\varphi|\equiv 1$.

More generally, if $(u,U)\in H^1(\Omega)\times\Pi$ is the solution of the forward problem for some $\sigma$, let $\varphi\in Lip(u(\overline\Omega))$ be any Lipschitz-continuous increasing function of one variable, such that $\varphi(t)=t+ c_k$ whenever $t\in u(e_k)$, for each $k=0,...,N $, and constants $c_k$ satisfying $\sum_{k=0}^N c_k=0$. One can easily verify that the function
\begin{align}\label{Lipschitz}
u_\varphi:=\varphi\circ u
\end{align}solves the forward problem with the conductivity
\begin{align}\label{111}
\sigma_\varphi:=\frac{\sigma}{\varphi'\circ u},
\end{align}while $\sigma|\nabla u|=\sigma_\varphi|\nabla u_\varphi|$.

For H\"{o}lder-continuous conductivities, in Theorem \ref{main} we prove
that \eqref{Lipschitz}, \eqref{111} must hold in a neighborhood of any non-critical point. The following example shows that \eqref{Lipschitz}, \eqref{111} need not hold in the whole domain $\Omega$ for a single function $\varphi$.

Let $\Omega\subset R^2 $ be the curvilinear octagon with hyperbolic sides obtained  from the unit disc,
by carving out the peripheral regions along the branches of the hyperbolas $x^2-y^2=\pm\frac{1}{2}$,
and $xy=\pm\frac{\sqrt {3}}{4}$.  The electrodes $e_0, e_2$ are defined respectively by each connected
component of $x^2-y^2=\frac{1}{2}$ inside the disc, whereas $e_1,e_3$ are defined respectively by the connected components of $x^2-y^2=-\frac{1}{2}$. Thus defined, all the electrodes have equal length, denoted by $l$. We assume constant impedances $z_0=z_1=z_2=z_3=1$.  Through $e_0$ and $e_2$ one inputs the net currents $I_0=I_2=\frac{l}{2}$, and extracts $I_1=I_3= -\frac{l}{2}$ through $e_1$ and $e_3$. In $\Omega$ one measures $|J(x,y)|=2\sqrt{x^2+y^2}$. It is easy to check that the constant conductivity $\sigma=1$ is a possible solution to the inverse problem, with the corresponding voltage $u(x,y)=x^2-y^2$. However, this is not the only possibility: Let
$$\omega_\pm:=\{(x,y)\in\Omega:~ \epsilon<x^2-y^2<\frac{1}{2}-\epsilon,~\pm x>0\},$$
and  $\varphi_\pm:(0,\frac{1}{2})\to(0,\frac{1}{2})$ be any two increasing Lipschitz continuous functions with $\varphi_\pm(t)=t$ in $(0,\epsilon)\cup(\frac{1}{2}-\epsilon, \frac{1}{2}) $, for some $\epsilon>0$ sufficiently small.
Define a new conductivity by
\begin{equation}
\tilde\sigma=
\left\{\begin{array}{ll}
\frac{\sigma}{\varphi_\pm'\circ u},& \text{in } \omega_\pm,\\
1,& \text{in } \Omega\setminus\omega_\pm.
\end{array}\right.
\end{equation}
As in the previous examples, one can check that $\tilde\sigma$ is also a solution of the inverse problem for the same
$|J(x,y)|=2\sqrt{x^2+y^2}$, with corresponding potential $v$ equal to $u$ on $\Omega\setminus\omega_\pm$, and equal to $\varphi_\pm\circ u$  on $\omega_\pm$, respectively.

Similar to the approach in \cite{NTT09}, we formulate the inverse problem in terms of a weighted minimum gradient problem. Here we need a functional which is appropriate for the boundary conditions coming from the Complete Electrode Model and is entirely defined in terms of the data in the inverse problem. To obtain such a functional, we found it necessary to revisit the forward problem and recast
it as a minimization problem, see Appendix A. We are then able to show that the solution $(u,U)\in H^1(\Omega)\times\Pi$ of the forward problem is a global minimizer of the functional
\begin{align}\label{1LaplFunctional}
G_a(v,V)=\int_\Omega a|\nabla v|dx+\sum_{k=0}^N\int_{e_k}\frac{1}{2z_k}(v-V_k)^2ds-\sum_{k=0}^N I_k V_k,
\end{align}over $H^1(\Omega)\times \Pi$, with $a=\sigma|\nabla u|$ as in \eqref{a}.

Using $G_a$, we found the surprising fact that, given the positions and impedances of the electrodes,
knowledge of the magnitude of one current density field and of the corresponding average applied currents
(just one number in the case of two electrodes!) is still sufficient to determine the geometry of the connected
components of the equipotential sets. Furthermore, we remark that, since we recover the direction $\overrightarrow{N}$
(including orientation) of the electric field $\nabla u$, we also obtain an interesting phase retrieval result: the full current density vector field $J=a \overrightarrow{N}$ is recovered from its magnitude $a$, and knowledge of the input currents $I_k$ on the surface electrodes, even though the conductivity is not uniquely determined.

Uniqueness of the conductivity can be restored by additional measurement of the voltage potential on a curve $\Gamma$ connecting the electrodes; see Theorem \ref{unique_determination}. The additional measurement involves only a one dimensional subset of boundary measurements,
much less than required in the existing results for the Dirichlet problem.

For the unique determination result we assume that the curve $\Gamma$ and the set of electrodes satisfy a topological assumption, see \eqref{topASS}.

To illustrate the theoretical results, we propose an iterative algorithm and perform a
numerical experiment, see Section \ref{algorithm}. Similar to the algorithm in \cite{NTT09}
we decrease the functional $G_a$ on a sequence of solutions of forward problems for updated conductivities.

Conductivity imaging using the interior knowledge of the magnitude of current densities was first introduced in \cite{seo}. The examples of non-existence and non-uniqueness for the ensuing Neumann problem lead the authors of \cite{seo} to consider the magnitudes of two currents. The possibility of conductivity imaging via the magnitude of just one current density field was shown in \cite{NTT07} via the Cauchy problem, and in \cite{NTT09,NTT11} via  a minimum gradient problem with Dirichlet boundary conditions. Existence and uniqueness of such weighted gradient problems was studied in \cite{jerrard99}. Extensions to the case of inclusions with zero or infinite conductivity were obtained in \cite{MNTa_SIAM,MNTa_BV}. A structural stability result for the minimization problem can be found in \cite{nashedTa10}. Reconstruction algorithms based on the minimization problem were proposed in \cite{NTT09} and \cite{MNT}, and based on level set methods in \cite{NTT07,NTT09,verasTamasanTimonov14}. A local H\"{o}lder- continuous dependence of $\sigma$ on $|J|$ (for unperturbed Dirichlet data) has been recently established in \cite{montaltoStefanov}.  For further references on determining the isotropic conductivity based on measurements of current densities see \cite{zhang,seo,seo_ieee, jeun-rock, nachman,lee,  kimLee}, and
for some results on anisotropic  conductivities see \cite{MaDeMonteNachmanElsaidJoy,MNNick2014, balGuoMonard2d, balGuoMonard3d}.

\section{A weighted minimum gradient problem for the CEM boundary conditions}\label{section:minproblem}

In this section we show that the solution of the forward problem is a global minimizer of the functional $G_a$ in \eqref{1LaplFunctional} over $H^1(\Omega)\times\Pi$. The regularity assumptions are the ones from the forward problem.

\begin{proposition}\label{G_aEquivalence}Let $\Omega\subset \mathbb{R}^n$, $n\geq 2$, be a bounded domain with Lipschitz boundary. Let
 $e_k$,  for $k=0,...,N,$  be disjoint  subsets of the boundary of positive  surface measure, $\sigma\in L^\infty(\Omega)$ and  $z_k\in L^\infty(e_k)$ be bounded away from zero, and $I_k$ satisfy \eqref{sum0inject}. Let $(u,U)\in H^1(\Omega)\times \Pi$ be the unique solution of the forward problem \eqref{conductivity_eq}, \eqref{robin_kForward}, \eqref{inject_kForward}, \eqref{no_flux_off_electrodesForward}. If $a:=\sigma|\nabla u|$, then
\begin{align}\label{globalMIN}
G_a(v,V)\geq G_a(u,U), \quad \forall (v,V)\in H^1(\Omega)\times\Pi.
\end{align}
\end{proposition}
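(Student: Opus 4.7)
The plan is to exploit the Cauchy--Schwarz inequality together with the weak formulation of the CEM forward problem, in a manner reminiscent of the classical proof that harmonic functions minimize the Dirichlet energy (but here for the $1$-Laplacian with Robin-type boundary terms). The functional $G_a$ is designed precisely so that the Robin/CEM boundary terms absorb the boundary flux contributions coming from integration by parts.

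First, I would record the weak formulation of the forward problem: testing \eqref{conductivity_eq} against an arbitrary $w\in H^1(\Omega)$, using \eqref{robin_kForward}--\eqref{no_flux_off_electrodesForward} and rewriting $w=(w-W_k)+W_k$ on each $e_k$, the condition $\sum_k I_k=0$ lets me absorb the $W_k$ pieces to obtain, for every $(w,W)\in H^1(\Omega)\times \mathbb{R}^{N+1}$,
\begin{equation}\label{weak}
\int_\Omega \sigma\nabla u\cdot\nabla w\,dx+\sum_{k=0}^N\int_{e_k}\frac{1}{z_k}(u-U_k)(w-W_k)\,ds=\sum_{k=0}^N I_k W_k.
\end{equation}
(This is presumably the content of Appendix A.) Taking $(w,W)=(u,U)$ in \eqref{weak} and plugging $a=\sigma|\nabla u|$ into the definition of $G_a$ gives, after the obvious cancellation,
\begin{equation}\label{valueAtU}
G_a(u,U)=-\sum_{k=0}^N\int_{e_k}\frac{1}{2z_k}(u-U_k)^2\,ds.
\end{equation}

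Next, for an arbitrary competitor $(v,V)\in H^1(\Omega)\times\Pi$, the pointwise Cauchy--Schwarz inequality gives
\[
a|\nabla v|=\sigma|\nabla u|\,|\nabla v|\ge \sigma\nabla u\cdot\nabla v,
\]
so that, using \eqref{weak} with $(w,W)=(v,V)$,
\[
\int_\Omega a|\nabla v|\,dx\;\ge\;\int_\Omega\sigma\nabla u\cdot\nabla v\,dx\;=\;-\sum_{k=0}^N\int_{e_k}\frac{1}{z_k}(u-U_k)(v-V_k)\,ds+\sum_{k=0}^N I_k V_k.
\]
Adding the boundary and forcing terms from $G_a(v,V)$ to both sides, the boundary contributions combine via the elementary identity $\tfrac{1}{2}X^{2}-YX=\tfrac{1}{2}(X-Y)^{2}-\tfrac{1}{2}Y^{2}$ with $X=v-V_k$, $Y=u-U_k$, yielding
\[
G_a(v,V)\;\ge\;\sum_{k=0}^N\int_{e_k}\frac{1}{2z_k}\bigl((v-V_k)-(u-U_k)\bigr)^2 ds-\sum_{k=0}^N\int_{e_k}\frac{1}{2z_k}(u-U_k)^2 ds.
\]
The first sum is nonnegative and the second equals $G_a(u,U)$ by \eqref{valueAtU}, proving \eqref{globalMIN}.

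There is no real obstacle: once the weak formulation \eqref{weak} is in hand, everything reduces to one application of Cauchy--Schwarz in $\Omega$ and one completion of the square on each electrode. The only thing to watch is the sign bookkeeping in deriving \eqref{weak} from the outward-normal condition $\sigma\,\partial u/\partial\nu=(U_k-u)/z_k$ and the fact that $\sum_k I_k=0$, which is what makes the weak formulation independent of the normalization chosen for $U$ and legitimates testing against any $V\in\Pi$.
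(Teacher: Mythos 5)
Your proof is correct and follows essentially the same route as the paper's: pointwise Cauchy--Schwarz $a|\nabla v|\ge\sigma\nabla u\cdot\nabla v$, integration by parts against the CEM boundary conditions to reduce everything to electrode integrals, and a completion of the square identical to the paper's inequality \eqref{quadratic}. The only cosmetic difference is that you package the integration by parts into the weak formulation \eqref{Gateaux} up front (as in Appendix A), whereas the paper carries out the same steps inline in the chain \eqref{main_estimate}.
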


\begin{proof}For any $(v,V)\in H^1(\Omega)\times\Pi$, we have the inequality

\begin{align}
G_a(v,V)&=\int_\Omega\sigma|\nabla u||\nabla v|dx +\frac{1}{2}\sum_{k=0}^N
\int_{e_k}\frac{1}{z_k}(v-V_k)^2ds-\sum_{k-0}^N I_kV_k\nonumber \\
&\geq \int_\Omega \sigma\nabla u \cdot\nabla v dx +\sum_{k=0}^N\int_{e_k}\left[\frac{1}{2z_k}(v-V_k)^2-V_k\sigma\frac{\partial u}{\partial \nu}\right]ds\nonumber\\
&=\int_{\partial\Omega} v\sigma\frac{\partial u}{\partial\nu}ds+\sum_{k=0}^N\int_{e_k}\left[\frac{1}{2z_k}(v-V_k)^2-V_k\sigma\frac{\partial u}{\partial \nu}\right]ds\nonumber\\
&=\sum_{k=0}^N\int_{e_k}\left[v\sigma\frac{\partial u}{\partial\nu}+\frac{1}{2z_k}(v-V_k)^2 -V_k\sigma\frac{\partial u}{\partial \nu}\right]ds\nonumber\\
&=\sum_{k=0}^N\int_{e_k}\frac{1}{z_k}\left[(v-V_k)(U_k-u)+\frac{1}{2}(v-V_k)^2\right]ds,\label{main_estimate}
\end{align}where the first equality uses \eqref{a}, the next line uses \eqref{inject_kForward} and the Cauchy-Schwarz inequality, the next equality uses \eqref{conductivity_eq} and the divergence theorem, the third equality uses \eqref{no_flux_off_electrodesForward}, and the last equality uses \eqref{robin_kForward}.

In particular, when $v=u$, the inequality in the estimate \eqref{main_estimate} holds with equality yielding
\begin{align}
G_a(u,V)=\sum_{k=0}^N\int_{e_k}\frac{1}{z_k}\left[\frac{1}{2}(u-V_k)^2-(u-V_k)(u-U_k)\right]ds,
\end{align}and
\begin{align}\label{minimumValue}
G_a(u,U)=-\frac{1}{2}\sum_{k=0}^N\int_{e_k}\frac{1}{z_k}(u-U_k)^2ds.
\end{align}

The global minimizing property \eqref{globalMIN} then follows from \eqref{main_estimate} and \eqref{minimumValue} using the pointwise inequality
\begin{align}\label{quadratic}
\frac{1}{2}(v-V_k)^2-(v-V_k)(u-U_k)
\geq -\frac{1}{2}(u-U_k)^2.
\end{align}

\end{proof}

Note how each of the CEM equations \eqref{conductivity_eq}, \eqref{robin_kForward}, \eqref{inject_kForward}, \eqref{no_flux_off_electrodesForward} was used in the above proof.


\section{Local characterization of non-uniqueness and applications}

In this section we state and prove our main result and its consequences to the conductivity imaging problem.

\begin{theorem}\label{main}
Let $\Omega\subset\mathbb{R}^d$, $d\geq 2$ be a bounded, Lipschitz domain,  and let $e_k$, $k=0,...,N$,  be disjoint  subsets of the boundary of positive surface measure. Assume that the corresponding  impedances $z_k$ satisfy \eqref{common_lower_impedance}, and that the given currents  $I_k$ are such that \eqref{sum0inject} holds. Let $(u, U),(v,V)\in H^1(\Omega)\times\Pi$, be the solutions of the forward problem \eqref{conductivity_eq}, \eqref{robin_kForward}, \eqref{inject_kForward}, and \eqref{no_flux_off_electrodesForward} corresponding to unknown conductivities $\sigma,\tilde\sigma\in C^\alpha(\Omega)$ satisfying \eqref{lowerbound_sigma}. Assume that
\begin{align}\label{same_|J|}
\sigma|\nabla u|=\tilde\sigma|\nabla v |>0\;a.e. \mbox{in}\;\Omega.
\end{align}

Then:

(i) for each $k=0,...,N$,
\begin{align}\label{on_the_electrodes}
u|_{e_k}-U_k = v|_{e_k}-V_k, \;\text{a.e. on }e_k.
\end{align}

(ii) for a.e. $x^0\in\Omega$ with $|\nabla u(x_0)|\neq 0$, there exists a neighborhood $O_0$ of $x^0$ and a
function $\varphi\in C^1(u(O_0))$, such that
\begin{align}\label{charact}
v=\varphi\circ u,\quad \text{in }O_0,
\end{align}and
\begin{align}\label{tilde_sigma}
\tilde\sigma=\frac{\sigma}{\varphi'\circ u},\quad \text{in } O_0.
\end{align}Moreover, the equalities \eqref{charact} and \eqref{tilde_sigma} hold
on the union of the connected components of the level sets of $u$ passing through $O_0$.

\end{theorem}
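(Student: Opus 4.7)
The plan is to use Proposition \ref{G_aEquivalence} twice. Since hypothesis \eqref{same_|J|} makes $a := \sigma|\nabla u| = \tilde\sigma|\nabla v|$, the proposition shows that both $(u,U)$ and $(v,V)$ are global minimizers of the same functional $G_a$; in particular $G_a(v,V) = G_a(u,U)$. Substituting $(v,V)$ into the chain of inequalities from the proof of Proposition \ref{G_aEquivalence} therefore forces each inequality in that chain to be an equality. Only two inequalities appear there: (a) the Cauchy--Schwarz bound $|\nabla u||\nabla v| \geq \nabla u \cdot \nabla v$, integrated against $\sigma$ over $\Omega$; and (b) the pointwise estimate \eqref{quadratic}, which rewrites as $\tfrac12\bigl((v-V_k)-(u-U_k)\bigr)^2 \geq 0$ on each $e_k$.

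Equality in (b) gives $v|_{e_k} - V_k = u|_{e_k} - U_k$ a.e.\ on $e_k$, which is claim (i). Equality in (a), combined with $\sigma|\nabla u| = \tilde\sigma|\nabla v|$, gives
\[
\nabla v \;=\; \frac{\sigma}{\tilde\sigma}\,\nabla u \quad \text{a.e.\ in } \Omega.
\]
By standard Schauder interior regularity for the conductivity equation with $C^\alpha$ coefficient, both $u$ and $v$ belong to $C^{1,\alpha}_{\mathrm{loc}}(\Omega)$, so this identity holds pointwise on the open set $\{|\nabla u|>0\}$.

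For (ii), fix $x_0$ with $\nabla u(x_0)\neq 0$. By continuity there is a neighborhood $O_0$ of $x_0$ on which $|\nabla u|$ stays away from zero; the implicit function theorem then shows that the level sets $\{u = c\}\cap O_0$ form a $C^{1,\alpha}$ foliation of $O_0$. The parallelism $\nabla v \parallel \nabla u$ forces $v$ to be constant along each leaf, so $v = \varphi\circ u$ in $O_0$ for some function $\varphi : u(O_0)\to\mathbb{R}$. Differentiating and comparing with the displayed identity yields $\varphi'\circ u = \sigma/\tilde\sigma$, which is $C^\alpha$ and strictly positive; hence $\varphi\in C^{1,\alpha}(u(O_0))$ is increasing, and $\tilde\sigma = \sigma/(\varphi'\circ u)$ in $O_0$, proving \eqref{charact} and \eqref{tilde_sigma}.

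For the extension to the union of connected components of level sets of $u$ passing through $O_0$, I would propagate the representation along each such component $L$: around any regular point $y\in L$ the same local argument produces a local $\varphi_y$, and since $u$ is constant on $L$, the value $v(y) = \varphi_y(u(y))$ must agree with $\varphi(u(y))$ on overlaps, so connectedness patches these into a single $\varphi$. The main obstacle is handling the possibility that $L$ meets a critical point of $u$, where the implicit function argument breaks down; one addresses this either by using the $C^{1,\alpha}$-regularity of $u$ (which keeps the critical set small) or by interpreting the conclusion on the regular portion of $L$ reached from $O_0$ by continuous propagation, which is exactly the set appearing in the theorem.
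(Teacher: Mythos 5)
Your proposal is correct and follows essentially the same route as the paper: both $(u,U)$ and $(v,V)$ minimize $G_a$, so the Cauchy--Schwarz and quadratic inequalities in the proof of Proposition \ref{G_aEquivalence} become equalities, yielding (i) on the electrodes and the pointwise parallelism $\nabla v=(\sigma/\tilde\sigma)\nabla u$ in the interior; your implicit-function-theorem foliation of $O_0$ is just a repackaging of the paper's explicit coordinate map $G$ (which in addition guarantees that the level sets of $u$ within $O_0$ are connected, so that $\varphi$ is well defined), and your propagation along connected components of level sets matches the paper's closing argument.
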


\begin{proof}
From the interior elliptic regularity we have $u, v\in C^{1,\alpha}(\Omega)$;
see, e.g., \cite[Theorem 8.34]{gilbargTrudinger}. Thus, the sets of critical points of $u$ and $v$ are closed,
and, by hypothesis \eqref{same_|J|} they are negligible. Let $S$ denote their union. It follows that $\Omega\setminus S$ is open and dense in $\Omega$.

According to Proposition \ref{G_aEquivalence} $(u,U)$ and $(v,V)$ are both minimizers of $G_a$, and thus
\begin{align}\label{sameValue}
G_a(u,U)=G_a(v,V).
\end{align}
In particular, using \eqref{minimumValue}, \eqref{sameValue}, \eqref{main_estimate}, and \eqref{quadratic}, the inequalities
\begin{align*}
-\frac{1}{2}\sum_{k=0}^N\int_{e_k}\frac{1}{z_k}(u-U_k)^2ds&=G_a(v,V)\\
&\geq\sum_{k=0}^N\int_{e_k}\frac{1}{z_k}\left[(v-V_k)(U_k-u)+\frac{1}{2}(v-V_k)^2\right]ds\\
&\geq -\frac{1}{2}\sum_{k=0}^N\int_{e_k}\frac{1}{z_k}(u-U_k)^2ds
\end{align*}must be equalities. Thus,
\begin{align}\label{sumOfZeros}
\sum_{k=0}^N\int_{e_k}\frac{1}{2z_k}\left\{(v-V_k) - (u-U_k)\right \}^2ds=0.
\end{align}
Now \eqref{sumOfZeros} together with $z_k>0$  yield \eqref{on_the_electrodes} for each  $k=0,1,...,N.$

We will prove part (ii) for any point in $\Omega\setminus S$.

Since \eqref{main_estimate} holds with equality, we must also have
\begin{align}\label{parallel0}
\nabla u\cdot\nabla v=|\nabla u|\cdot|\nabla v|, \quad\text{a.e. on }\Omega.
\end{align}
Since both gradients are continuous, in view of \eqref{parallel0} they must be parallel whenever one of them is nonzero, in particular
\begin{align}\label{parallel}
\nabla v=\mu\nabla u\text{ in }\Omega\setminus S,
\end{align}for some $\mu\in C(\Omega\setminus S)$ with $\mu > 0$ in $\Omega\setminus S$. Moreover, since $\Omega\setminus S$ is dense, $\mu$ extends by continuity to the whole domain $\Omega$.

Let $x^0\in\Omega\setminus S$ be arbitrarily fixed. For some component $x_j$,  $\left|\frac{\partial u}{\partial x_j}(x^0)\right|>0$.
Consider $G=(G^1,...,G^n): \Omega\to \mathbb{R}^n$ defined by $G^k(x_1,...,x_n)=x_k$ if $k\neq j$,
and $G^j(x_1,...,x_n) =u(x_1,...,x_n)$. Then the Jacobian determinant
$$|DG(x^0)|=\left|\frac{\partial u}{\partial x_j}(x^0)\right|>0,$$and,
by the inverse function theorem, there is a neighborhood $O_0$ of $x_0$ in $\Omega\setminus S$, such that $G:O_0\to G(O_0)$ is a diffeomorphism.

Consider  $v\circ G^{-1}: G(O_0)\to\mathbb{R}$. We use \eqref{parallel} to show that $$G(O_0)\ni y= (y_1,....,y_n)\mapsto v\circ G^{-1}(y)$$ is independent of $y_k$, for all $k\neq j$. Indeed, for $y\in G(O_0)$,
\begin{align*}
\frac{\partial}{\partial y_k}(v\circ G^{-1})(y)&=\sum_{i=1}^n\frac{\partial v}{\partial x_i}(G^{-1}(y))\frac{\partial x_i}{\partial y_k}(y)\\
&=\mu(G^{-1}(y))\sum_{i=1}^n\frac{\partial u}{\partial x_i}( G^{-1}(y))\frac{\partial x_i}{\partial y_k}(y)\\
&=\mu(G^{-1}(y))\frac{\partial }{\partial y_k}(u\circ G^{-1})(y)\\
&=\mu(G^{-1}(y))\frac{\partial y_j }{\partial y_k}=0,\quad\text{for }k\neq j,
\end{align*}where the second equality uses \eqref{parallel} and the next to the last equality uses the definition of $G$.

For each $y_j\in v(O_0)$ we can now well define
\begin{align}\label{def_varphi}
\varphi(y_j):=v\circ G^{-1}(y_1,...,y_n).
\end{align}If $x=G^{-1}(y)\in O_0$, then $y_j= G^j(x) =u(x)$, and the equation above shows
$$\varphi(u(x))= v(x),\quad\text{for all }x\in O_0.$$



By \eqref{same_|J|} we also obtain
\begin{align*}
\sigma(x)|\nabla u(x)|=\tilde\sigma(x)|\nabla v(x)|=\tilde\sigma(x)\varphi'(u(x))|\nabla u(x)|,\quad \forall x\in O_0.
\end{align*}Since $|\nabla u|>0$ in $ O_0$, the relation \eqref{tilde_sigma} follows.

Finally, since $u$ and $v$ are constant on each other's connected components of level sets within $\Omega\setminus S$ (as can be seen from a differentiation in the direction tangential to the level set and \eqref {parallel}), the identity \eqref{charact} extends to points on any connected component of a level set of $u$ passing through $O_0$.


\end{proof}

The result above  implies that knowledge of the input currents at the boundary is sufficient to determine the full current density from measurements of its magnitude in the interior, even when the conductivity is not determined uniquely.  Thus we have obtained the following ``phase retrieval" result, which may be of independent interest.

\begin{corollary}[Phase retrieval]\label{phase} Let $\Omega\subset\mathbb{R}^d$, $d\geq 2$ be a bounded, Lipschitz domain,  and let $e_k$, $k=0,...,N$,  be disjoint  subsets of the boundary of positive surface measure. Assume that the corresponding  impedances $z_k$ satisfy \eqref{common_lower_impedance}, and that the given currents  $I_k$ are such that \eqref{sum0inject} holds.
Let $(u, U),(v,V)\in H^1(\Omega)\times\Pi$ be the solutions of the forward problem \eqref{conductivity_eq}, \eqref{robin_kForward}, \eqref{inject_kForward}, and \eqref{no_flux_off_electrodesForward} corresponding to unknown conductivities $\sigma,\tilde\sigma\in C^\alpha(\Omega)$ satisfying \eqref{lowerbound_sigma}.
 Let $J:=\sigma\nabla u$ and $\tilde{J}:=\tilde\sigma \nabla v$ be the corresponding current densities. If
\begin{align}
|J|=|\tilde{J}| >0\;a.e. \mbox{in}\;\Omega,
\end{align}then
\begin{align}
J=\tilde{J}\;  \mbox{in}\;\Omega.
\end{align}
\end{corollary}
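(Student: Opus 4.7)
The plan is to read off the phase retrieval statement directly from part (ii) of Theorem \ref{main}, together with the continuity of both current density fields.

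First I would invoke interior elliptic regularity to note that under the $C^\alpha$ assumption on the conductivities, $u,v\in C^{1,\alpha}(\Omega)$, so that $J=\sigma\nabla u$ and $\tilde J=\tilde\sigma\nabla v$ are continuous on $\Omega$. The hypothesis $|J|=|\tilde J|>0$ a.e.\ forces the critical sets of $u$ and $v$ to have Lebesgue measure zero; in particular, with $S$ denoting the union of these closed critical sets as in the proof of Theorem \ref{main}, the set $\Omega\setminus S$ is open, dense, and of full measure in $\Omega$.

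Next I would apply Theorem \ref{main}(ii) to obtain, at every point of $\Omega\setminus S$, the relation $\nabla v=\mu\nabla u$ with $\mu=\varphi'\circ u>0$ continuous, together with $\tilde\sigma=\sigma/(\varphi'\circ u)=\sigma/\mu$. A direct substitution then gives
\begin{equation*}
\tilde J=\tilde\sigma\,\nabla v=\frac{\sigma}{\mu}\cdot\mu\,\nabla u=\sigma\,\nabla u=J\quad\text{on }\Omega\setminus S.
\end{equation*}
Equivalently, one can avoid invoking the scalar $\tilde\sigma=\sigma/\mu$ explicitly: parallelism of $\nabla u$ and $\nabla v$ with the same orientation implies that $J$ and $\tilde J$ point in the same direction at each point of $\Omega\setminus S$, and they have the same magnitude by hypothesis, so they must coincide there.

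Finally, since $J$ and $\tilde J$ are continuous on $\Omega$ and agree on the dense subset $\Omega\setminus S$, they are equal on all of $\Omega$. I do not anticipate any real obstacle here: the whole content is already packaged inside Theorem \ref{main}(ii). The only small subtlety is to make sure the argument covers the critical set, which is handled by the continuity/density extension rather than by trying to run the diffeomorphism argument at critical points.
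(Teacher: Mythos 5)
Your proposal is correct and follows essentially the same route as the paper: apply Theorem \ref{main}(ii) at an arbitrary point of $\Omega\setminus S$ to get $\tilde J=\tilde\sigma\nabla v=\frac{\sigma}{\varphi'\circ u}\nabla v=\sigma\nabla u=J$ locally, then extend to all of $\Omega$ by continuity of $J,\tilde J$ and density of $\Omega\setminus S$. No gaps.
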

\begin{proof} For $x^0\in\Omega\setminus S$ arbitrarily fixed, let $O_0$ and
$\varphi:~v(O_0) \to \mathbb{R}$ be the function provided by Theorem \ref{main}. From \eqref{tilde_sigma} and \eqref{charact} we have for all $x\in O_0$,
$$\tilde{J}(x)=\tilde{\sigma}(x)\nabla v(x)=\frac{\sigma(x)}{\varphi'\circ u(x)}\nabla v(x)=\sigma(x)\nabla u(x)=J(x).$$ Since $x^0$ is arbitrary in $\Omega\setminus S$,  $J$ and $\tilde{J}$ are continuous in $\Omega$, and $\Omega\setminus S$ is dense in $\Omega$, the result follows.

\end{proof}

{\bf Remark:} Theorem \ref{main} part (ii), and Corollary \ref{phase} are also valid for the Neumann problem.
The proofs are essentially the same as above, using instead the functional
$G^N_a(v)=\int_\Omega a|\nabla v|dx-\int_{\partial\Omega}fvds \displaystyle$,
where $f=\left.\sigma\frac{\partial u}{\partial\nu}\right|_{\partial\Omega}$ is the imposed current on the boundary.

\section{Unique determination for two and three dimensional models}
To determine the conductivity uniquely, we will assume additional knowledge of the voltage potential on some part of the boundary. For brevity, let
$E:=\bigcup_{k=0}^N e_k\displaystyle$ denote the set of all electrodes.

We show below that knowledge of the voltage potential $u$
along a boundary curve $\Gamma\subset\partial\Omega\setminus E$ which joins the electrodes is
sufficient to yield uniqueness in two and three dimensional domains. We assume that $\Gamma$ satisfies the
topological assumption:
\begin{equation}\label{topASS}
\Gamma\cup E \mbox{ is a connected set and each connected component of } \partial\Omega \setminus (\Gamma\cup {E}) \mbox{ is simply connected}.
\end{equation}
In two dimensions the second assumption is trivially satisfied.

For the proof of the uniqueness result we need continuity of solutions up to the
boundary for $\sigma$-harmonic functions satisfying CEM boundary conditions.

As a direct consequence of the Proposition \ref{regularity} and the Sobolev embedding theorem we have the following.
\begin{corollary}\label{holderRegularity}Let $\Omega\subset \mathbb{R}^n$ be a $C^2$-domain, $n=2,3$, and
let $\sigma\in C^\alpha(\overline\Omega)$ be $C^2$-smooth near the boundary.
Assume that the electrodes $e_k$ has Lipschitz boundaries, and $z_k\in C^2(e_k)$, $k=0,...,N$. Let $(u,U)\in H^1(\Omega)\times\Pi$  be the solution to the forward problem. Then

a)  $u\in C^{\alpha}(\overline\Omega)$, for $0<\alpha<1/2$.

b) For any $x_0\in\partial\Omega\setminus\partial E$, there is a neighborhood $V_0\subset \overline\Omega$ of $x_0$,  such that $u\in C^{1,\alpha}(V_0)$,  for $0<\alpha<1/2$.
\end{corollary}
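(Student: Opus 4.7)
The plan is to use Proposition \ref{regularity} as a black box supplying the maximal Sobolev regularity of the CEM solution $(u,U)$ compatible with the mixed boundary data, and then apply the Sobolev embedding theorem on Lipschitz domains, namely $W^{s,p}(\Omega')\hookrightarrow C^{k,\alpha}(\overline{\Omega'})$ whenever $s-n/p>k+\alpha$. Since the corollary is advertised as a direct consequence, essentially no independent analysis is required apart from matching the Sobolev indices with the target H\"older exponents in dimensions $n=2$ and $n=3$.

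For part (a), I would read off from Proposition \ref{regularity} the global statement, which can only be a fractional regularity of the form $u\in H^{s}(\Omega)$ below the natural threshold $s=3/2$ in 2D and $s=2$ in 3D, because the boundary condition changes type across $\partial E$ (Robin on each $e_k$, homogeneous Neumann on $\partial\Omega\setminus E$), which prevents $u$ from belonging to $H^2(\Omega)$ globally. The corresponding Sobolev--H\"older embedding, which takes $H^{s}(\Omega)$ into $C^{s-n/2}(\overline\Omega)$ in the relevant range, then yields $u\in C^{\alpha}(\overline\Omega)$ for every $\alpha<1/2$, uniformly in $n\in\{2,3\}$.

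For part (b), the crucial input is the hypothesis $x_0\in\partial\Omega\setminus\partial E$: the point lies in the interior of a smooth piece of $\partial\Omega$ on which only one type of boundary condition is in force, either the Robin condition on some $e_k$ with $z_k\in C^2(e_k)$, or the zero Neumann condition. I would therefore fix a neighborhood $V_0\subset\overline\Omega$ of $x_0$ so small that $\overline{V_0}\cap\partial E=\emptyset$. On such $V_0$, Proposition \ref{regularity} delivers the local-up-to-boundary regularity $u\in W^{2,p}(V_0)$ for every $p<\infty$ (since $\sigma\in C^\alpha(\overline\Omega)$ is $C^2$ near $\partial\Omega$ and $z_k\in C^2$). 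The embedding $W^{2,p}(V_0)\hookrightarrow C^{1,\alpha}(V_0)$ valid for $p>n/(1-\alpha)$ then gives the claimed $C^{1,\alpha}$ regularity for every $\alpha<1/2$, by choosing $p$ sufficiently large in terms of the desired $\alpha$ and of $n=2,3$.

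The main (and essentially only) obstacle is bookkeeping at the level of Sobolev indices: one must verify that the space furnished by Proposition \ref{regularity} reaches just below $H^{3/2}$ globally in 2D and just below $H^{2}$ globally in 3D, both of which embed exactly into $C^{\alpha}$ for $\alpha<1/2$, and that the local statement near a point away from $\partial E$ indeed improves to $W^{2,p}$ for arbitrarily large $p$. Once this matching is made, the corollary is immediate from standard embeddings.
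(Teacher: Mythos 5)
Your proposal follows the paper's route exactly: the corollary is obtained by feeding the Sobolev regularity of Proposition \ref{regularity} into the Sobolev embedding theorem, and the paper gives no further argument. The only discrepancy is in your identification of the input spaces: Proposition \ref{regularity} actually yields $u\in H^{2-\epsilon}(\Omega)$ globally and $u\in H^{3-\epsilon}$ locally near $\partial\Omega\setminus\partial E$ in \emph{both} dimensions (rather than $H^{3/2-}$ globally in 2D, or $W^{2,p}$ for all $p<\infty$ locally), but since $H^{2-\epsilon}\hookrightarrow C^{1/2-\epsilon}(\overline\Omega)$ and $H^{3-\epsilon}\hookrightarrow C^{1,1/2-\epsilon}$ when $n=3$ (the binding case that fixes the exponent $1/2$), your index bookkeeping lands on the same H\"older exponents and the conclusion is unaffected.
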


Another idea in the uniqueness result below is that the range of $u$ on
the union of the curve $\Gamma$ and the electrodes is the same as the range of
$u$ in $\overline\Omega$. This follows from the following maximum principle for the CEM,
which may be of independent interest.

\begin{proposition}(Maximum principle for CEM)\label{MAX}
Under the smoothness assumptions in the Corollary \ref{holderRegularity},
the solution $u$ achieves its minimum and maximum on $\overline{E}$.
If $\Gamma\subset\partial\Omega\setminus E$ is a curve such that $\Gamma\cup E$ is connected, then the range of $u$ over $\Gamma\cup \overline{E}$ coincides with the range of $u$ over $\overline\Omega$.
\end{proposition}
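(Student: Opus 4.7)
The plan is to combine the strong maximum principle for the divergence-form equation \eqref{conductivity_eq} with Hopf's boundary point lemma, using the no-flux boundary condition \eqref{no_flux_off_electrodesForward} to exclude maxima/minima from the non-electrode part of $\partial\Omega$. Throughout I may assume $u$ is non-constant, since if $u \equiv c$, then \eqref{robin_kForward} forces $U_k = c$ and the normalization $\sum_k U_k = 0$ forces $c = 0$, making the statement trivial.

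First I would apply the strong maximum principle to the elliptic equation $\nabla \cdot \sigma \nabla u = 0$ (which applies since $\sigma$ is $C^\alpha$ and bounded away from zero) to conclude that the extrema of $u$ over $\overline\Omega$ are attained on $\partial\Omega$. Next I would argue that an extremum cannot occur at any point $x_0 \in \partial\Omega \setminus \overline{E}$. The set $\partial\Omega \setminus \overline{E}$ is relatively open in $\partial\Omega$ (since $E = \bigcup e_k$ is relatively open and the closures of the $e_k$ are disjoint), and is contained in $\partial\Omega \setminus \partial E$, so Corollary \ref{holderRegularity}(b) gives $u \in C^{1,\alpha}$ in a neighborhood $V_0$ of $x_0$ in $\overline\Omega$. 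Because $\Omega$ is $C^2$, the interior ball condition holds at $x_0$, so Hopf's boundary point lemma for the operator $\nabla\cdot\sigma\nabla$ yields $\partial u / \partial\nu (x_0) > 0$ at a maximum (resp.\ $<0$ at a minimum). This contradicts $\partial u / \partial\nu = 0$ on $\partial\Omega \setminus \overline{E}$, which holds pointwise there thanks to the same $C^{1,\alpha}$ regularity. Hence the max and min of $u$ over $\overline{\Omega}$ are attained on $\overline{E}$.

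For the second assertion, I would use connectedness. Since $\Gamma \cup E$ is connected by hypothesis, its closure $\Gamma \cup \overline{E} = \overline{\Gamma \cup E}$ is connected as well. By Corollary \ref{holderRegularity}(a), $u$ is continuous on $\overline\Omega$, so $u(\Gamma \cup \overline{E})$ is a connected subset of $\mathbb{R}$, hence an interval. By the first part this interval contains both $\min_{\overline\Omega} u$ and $\max_{\overline\Omega} u$, and since $u(\overline\Omega)$ is also an interval (as the continuous image of a connected set), we have
\[
u(\overline\Omega) = [\min_{\overline\Omega} u,\, \max_{\overline\Omega} u] \subseteq u(\Gamma \cup \overline{E}) \subseteq u(\overline\Omega),
\]
yielding equality.

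The main obstacle is Step 2: applying Hopf's lemma requires $\partial u / \partial\nu$ to be well-defined and to actually vanish at $x_0$, which is not automatic from a weak/$H^1$ formulation of the CEM problem. The paper's up-to-the-boundary regularity result (Corollary \ref{holderRegularity}(b)), asserting $C^{1,\alpha}$ regularity of $u$ near any point of $\partial\Omega \setminus \partial E$, is precisely what makes the Hopf argument legitimate at non-electrode boundary points. Everything else reduces to standard interior elliptic tools and an elementary connectedness argument.
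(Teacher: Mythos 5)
Your argument is correct and is essentially the paper's own proof: weak maximum principle to push the extrema to $\partial\Omega$, Hopf's boundary point lemma combined with the no-flux condition \eqref{no_flux_off_electrodesForward} to force them onto $\overline{E}$, and connectedness of $\Gamma\cup\overline{E}$ to conclude $u(\Gamma\cup\overline{E})=[m,M]=u(\overline\Omega)$. You are in fact somewhat more careful than the paper in flagging that Corollary \ref{holderRegularity}(b) is what makes the pointwise vanishing of $\partial u/\partial\nu$ at non-electrode boundary points legitimate, and in disposing of the constant case.
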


\begin{proof}
By the weak maximum principle, the maximum $M$  and minimum $m$ of $u$ over $\overline\Omega$ occur on the boundary. By Hopf's strong maximum principle, at  a point of maximum, say $x_0\in\partial\Omega$, the normal derivative $\frac{\partial u}{\partial\nu}(x_0)$ must be strictly positive.  From the boundary condition \eqref{no_flux_off_electrodesForward} we then deduce $x_0\in \overline{E}$. The same argument applies to a point of minimum, where the normal derivative is strictly negative. Since $\Gamma\cup E$ is connected, and $u$ is continuous on $\overline\Omega$, $u(\Gamma\cup \overline{E})$ is a closed interval. Since the maximum $M$ and minimum $m$ occur on $\overline{E}$, then the range $u(\Gamma\cup \overline{E})=[m,M]=u(\overline\Omega)$.

\end{proof}

We are now ready to  prove our main uniqueness result.

\begin{theorem} [Unique determination]\label{unique_determination}
Let $\Omega\subset \mathbb{R}^n$ be a $C^2$-domain, $n=2,3$. Assume that
the electrodes have Lipschitz boundaries,
and $z_k\in C^2(e_k)$, $k=0,...,N$. For the currents $I_0,...,I_N$
satisfying \eqref{sum0inject}, let  $(u, U),(v,V)\in H^1(\Omega)\times\Pi$ be the solutions
of the forward problem \eqref{conductivity_eq}, \eqref{robin_kForward}, \eqref{inject_kForward} and \eqref{no_flux_off_electrodesForward} corresponding to unknown conductivities $\sigma,\tilde\sigma\in C^\alpha(\overline\Omega)$, which are assumed $C^2$-smooth near the boundary
and satisfying \eqref{lowerbound_sigma}.

If
\begin{align}
&\sigma |\nabla u|=\tilde\sigma |\nabla \tilde{u}|>0,\;a.e.\;\text{in}\;\Omega,\label{common|J|}\quad\text{and}\\
&u|_\Gamma=\tilde{u}|_\Gamma,\label{voltage_measurement}
\end{align}then
\begin{align}
&u=\tilde{u}\;\text{in }\overline\Omega,\label{u=tildeu+C}\\
&\sigma=\tilde{\sigma}\;\text{in }\Omega.
\end{align}
\end{theorem}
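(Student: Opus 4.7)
The plan is to combine Theorem \ref{main} with the additional measurement \eqref{voltage_measurement} and a maximum-principle argument to force the local function $\varphi$ of \eqref{charact} to be the identity on the entire range of $u$. This will give $u=\tilde u$, and then $\sigma=\tilde\sigma$ follows immediately from \eqref{common|J|}.

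The first step is to show $u=\tilde u$ on $\Gamma\cup\overline E$. By Corollary \ref{holderRegularity}(a), $u,\tilde u\in C^{\alpha}(\overline\Omega)$. Theorem \ref{main}(i) gives $u-\tilde u\equiv U_k-V_k$ on each electrode $e_k$, while \eqref{voltage_measurement} gives $u=\tilde u$ on $\Gamma$. Because the closures $\overline{e_k}$ are pairwise disjoint and $\Gamma\cup E$ is connected by the first half of \eqref{topASS}, no single $\overline{e_k}$ can be separated from the rest of $\Gamma\cup E$; hence $\overline\Gamma\cap\overline{e_k}\neq\emptyset$ for every $k$. Evaluating the continuous function $u-\tilde u$ at a common boundary point forces $U_k=V_k$ for all $k$, so $u=\tilde u$ on $\Gamma\cup\overline E$.

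The second step propagates this identity to the interior along level sets of $u$. The parallelism $\nabla\tilde u=\mu\nabla u$ in $\Omega\setminus S$ established in the proof of Theorem \ref{main} shows that $\tilde u$ is constant on each connected component of any level set of $u$ inside $\Omega\setminus S$. The crux is a topological claim: every connected component of $\{u=t\}\cap\overline\Omega$ meets $\Gamma\cup\overline E$, for each $t$ in the range of $u$. Extreme values are handled by Proposition \ref{MAX}. For intermediate $t$, a component $L$ disjoint from $\Gamma\cup\overline E$ would either lie compactly in $\Omega$, in which case the bounded open region it encloses has $u\equiv t$ on its boundary and hence inside by the maximum principle, contradicting $|\nabla u|>0$ a.e. together with unique continuation; or its trace on $\partial\Omega$ would lie inside a single component of $\partial\Omega\setminus(\Gamma\cup E)$, and the second half of \eqref{topASS} together with the Neumann condition \eqref{no_flux_off_electrodesForward} and Hopf's lemma would again rule this out. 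Granting the claim, for any $x^0\in\Omega\setminus S$ one picks $y$ on the same level-set component with $y\in\Gamma\cup\overline E$; then $\tilde u(x^0)=\tilde u(y)=u(y)=u(x^0)$, and density of $\Omega\setminus S$ plus continuity yield $u\equiv\tilde u$ on $\overline\Omega$. Plugging into \eqref{common|J|} gives $\sigma=\tilde\sigma$ wherever $|\nabla u|>0$, hence almost everywhere, and continuity extends this to all of $\Omega$.

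The main obstacle is the topological claim. In two dimensions it is essentially immediate from the Jordan curve theorem, Proposition \ref{MAX}, and \eqref{no_flux_off_electrodesForward}, which forces level curves of a nonconstant $\sigma$-harmonic function to meet $\partial\Omega$ only through $E$. In three dimensions the simple-connectedness clause of \eqref{topASS} becomes essential, because otherwise a level surface could close up around a handle of a non-simply-connected component of $\partial\Omega\setminus(\Gamma\cup E)$ without ever touching $\Gamma\cup\overline E$. Carefully combining the topological hypothesis with the Neumann condition and Hopf's lemma to preclude such configurations is the substantive part of the argument.
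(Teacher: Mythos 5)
Your outline follows the paper's proof closely: Theorem \ref{main}(i) plus the measurement on $\Gamma$ to get $u=\tilde u$ on $\Gamma\cup\overline E$, Proposition \ref{MAX} and the constancy of $\tilde u$ on level-set components of $u$, and then the topological claim that each (regular) level-set component meets $\Gamma\cup E$. The first step is carried out correctly (your observation that connectedness of $\Gamma\cup E$ forces $\overline\Gamma\cap\overline{e_k}\neq\emptyset$ is exactly how the paper kills the constants $c_k=U_k-V_k$), and the reduction of the whole theorem to the topological claim is right.

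The problem is that the topological claim is precisely the part you do not prove: you end by saying that "carefully combining the topological hypothesis with the Neumann condition and Hopf's lemma \ldots is the substantive part of the argument," which is an acknowledgement of the gap rather than a proof. The paper closes this step as follows. For a regular value $t$, a component $\Sigma_t$ always reaches $\partial\Omega$ (Appendix B, via Alexander duality and the minimizing property of $u$ for $G_a$ --- note the paper does not use the interior maximum principle here, though your version for a compactly contained component is also workable once the enclosed region is exhibited). If $\Sigma_t$ misses $\Gamma\cup E$, its trace $\Sigma_t\cap\partial\Omega$ is a simple closed curve inside a single, simply connected component $O$ of $\partial\Omega\setminus(\Gamma\cup E)$; the Jordan curve theorem then produces $O_+\subset O$ and hence a subdomain $\Omega_+\subset\Omega$ with $\partial\Omega_+=\Sigma_t\cup O_+$. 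The paper's decisive observation is variational: replacing $u$ by the constant $t$ on $\Omega_+$ changes $u$ only off the electrodes, so it leaves the boundary terms of $G_a$ untouched while strictly decreasing the gradient term unless $u\equiv t$ on $\Omega_+$; either way one contradicts the minimizing property of $(u,U)$ or the hypothesis $|\nabla u|>0$ a.e. Your proposed alternative --- apply the strong maximum principle and Hopf's lemma on $\Omega_+$, using that $\partial u/\partial\nu=0$ on $O_+$ to force $\max_{\overline{\Omega_+}}u=\min_{\overline{\Omega_+}}u=t$ --- can indeed be made to work (it is the argument of Proposition \ref{MAX} localized to $\Omega_+$, and the regularity of Corollary \ref{holderRegularity}(b) is available on $O_+$), but it requires exactly the same Jordan-curve construction of $\Omega_+$ that you only gesture at, and it is this construction, together with the role of simple connectedness of $O$ in dimension three, that constitutes the missing content.
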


\begin{proof} We will give the proof for the three dimensional case and indicate where arguments simplify in the two dimensional case.

From Corollary \ref{holderRegularity} we know that $u$ and $\tilde{u}$ are continuous up to the boundary. In particular, the identity \eqref{on_the_electrodes} in Theorem \ref{main} shows that $u=\tilde{u}+c_k$ on each electrode $e_k$, $k=0,...,N$. Since $u=\tilde{u}$ on $\Gamma\cap \partial E$ by hypothesis \eqref{voltage_measurement}, we conclude that $c_k=0$ for each $k=0,...,N$.
So far we showed that $u$ and $\tilde{u}$ coincide on $\Gamma\cup {E}$, and following Proposition \ref{MAX}, $u(\overline\Omega)=\tilde{u}(\overline\Omega)=:[m,M]$.

We refer to a value $t\in [m,M]$ as being regular if the corresponding $t$-level  set is free of singular points.
For $t$-regular, let $\Sigma_t$ be a connected component of the $t$- level
set. The arguments in \cite[Theorem 1.3]{NTT09} showing that $\Sigma_t$ reaches $\partial\Omega$ do not use any boundary information, and thus they remain valid for the CEM boundary conditions; we recall them in Appendix B for completeness.

To prove unique determination, it now suffices to show that  $\Sigma_t$
intersects $\Gamma\cup {E}$. We reason by contradiction: Assume that $\Sigma_t$ misses $\Gamma\cup {E}$.
Then the intersection $\Sigma_t\cap \partial\Omega$ is entirely contained in a
connected component $O\subset \partial\Omega\setminus(\Gamma\bigcup E)$.
By hypothesis \eqref{topASS} $O$ is simply connected. Moreover, as transversal (in fact orthogonal) intersection of $C^1$-smooth surfaces, the set $\Sigma_t\cap \partial\Omega$ is a one dimensional immersed $C^1$- submanifold without boundary, i.e. a closed curve (in two dimensions it consists of two points). Since $\Sigma_t$ has no singular points, the curve $\Sigma_t\cap \partial\Omega$ has no self-intersection and thus is a simple closed curve embedded in the simply connected subset $O$. By the Jordan curve theorem, $\Sigma_t\cap \partial\Omega$ separates $O$ in two parts, one of which, say $O_+$, is enclosed by $\Sigma_t\cap \partial\Omega$. Let $\Omega_+$ be the subset of $\Omega$ whose boundary is $\Sigma_t\bigcup O_+$, and define the new function
\begin{equation}
u_t(x):=
\left\{
\begin{array}{ll}
u(x),& x\in\Omega\setminus \Omega_+,\\
t,& x \in \overline{\Omega}_+,
\end{array}
\right.
\end{equation}Note that $u_t$ may have modified values at the boundary,
but only off the electrodes. Since $\Omega_+$ is an extension domain ($\partial\Omega_+ \cap \Omega=\Sigma_t$ has a unit normal
everywhere) the new map $u_t\in H^1(\Omega)$ and strictly decreases
the functional \eqref{1LaplFunctional} unless $u=u_t$.  This contradicts the minimizing property of $u$.
Therefore $u\equiv t$ in $\Omega_+$, which now contradicts \eqref{common|J|}.
Therefore $\Sigma_t$ intersects $\Gamma\bigcup E$, and thus $u|_{\Sigma_t}=\tilde{u}|_{\Sigma_t}=t$. Since the set $\{\Sigma_t:~ t -\text{regular value}\}$ is dense in $\overline\Omega$, the identity \eqref{u=tildeu+C} follows. Now \eqref{common|J|} yields that $\sigma=\tilde\sigma$, a.e. in $\Omega$, and by continuity in $\Omega$.

\end{proof}

\section{A minimization algorithm for the weighted gradient functional with CEM boundary constraints}\label{algorithm}
In this section we propose an iterative algorithm which minimizes the functional $G_a$ in \eqref{1LaplFunctional}. It is the analogue of an algorithm in \cite{NTT09} adapted to the CEM boundary conditions.

The following lemma is key to constructing a minimizing sequence for the functional $G_a$.
\begin{lemma}\label{decreasingG_a}Assume that $v\in H^1(\Omega)$ satisfies
\begin{align}
\epsilon\leq \frac{a}{|\nabla v|}\leq \frac{1}{\epsilon},
\end{align}for some $\epsilon>0$, and let $(u,U)\in H^1(\Omega)\times\Pi$ be the unique solution to the forward problem for $\sigma:=a/|\nabla v|$. Then
\begin{align}\label{decreasing_1LaplaceFunctional}
G_a(u,U)\leq G_a(v,V), \quad\text{for all } V\in\Pi.
\end{align}
Moreover, if equality holds in \eqref{decreasing_1LaplaceFunctional} then $(u,U)=(v,V)$.
\end{lemma}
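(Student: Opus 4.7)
The plan is to combine the quadratic energy formulation of the forward problem (recalled in Appendix A) with a pointwise AM--GM inequality. Writing
\[
F(w,W):=\tfrac12\int_\Omega\sigma|\nabla w|^2dx+\tfrac12\sum_{k=0}^N\int_{e_k}\tfrac{1}{z_k}(w-W_k)^2ds-\sum_{k=0}^N I_k W_k,
\]
the solution $(u,U)$ of the forward problem with conductivity $\sigma=a/|\nabla v|$ is the unique minimizer of $F$ over $H^1(\Omega)\times\Pi$. The two-sided bound $\epsilon\leq a/|\nabla v|\leq 1/\epsilon$ guarantees $\sigma\in L^\infty(\Omega)$ with $\mathrm{essinf}\,\sigma>0$, so this $\sigma$ lies in the class for which the forward problem is well posed and $(u,U)$ exists and is unique.

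First I would rewrite $\int_\Omega a|\nabla u|\,dx=\int_\Omega\sigma|\nabla v||\nabla u|\,dx$ and apply the pointwise inequality $|\nabla v||\nabla u|\leq \tfrac12(|\nabla v|^2+|\nabla u|^2)$ to obtain
\[
G_a(u,U)\leq \tfrac12\int_\Omega\sigma|\nabla v|^2dx+\Bigl[\tfrac12\int_\Omega\sigma|\nabla u|^2dx+\sum_{k=0}^N\int_{e_k}\tfrac{1}{2z_k}(u-U_k)^2ds-\sum_{k=0}^N I_kU_k\Bigr],
\]
where the bracket is exactly $F(u,U)$. Using the minimization property $F(u,U)\leq F(v,V)$ and then re-expanding $F(v,V)$ in the same way (noting that $\sigma|\nabla v|^2=a|\nabla v|$), the right-hand side collapses to $G_a(v,V)$, which is \eqref{decreasing_1LaplaceFunctional}.

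For the equality case, equality in the AM--GM step forces $|\nabla u|=|\nabla v|$ a.e.\ on $\Omega$, and equality throughout also forces $F(u,U)=F(v,V)$; strict convexity of $F$ on the affine constraint set $H^1(\Omega)\times\Pi$ (which removes the usual additive-constant ambiguity and yields coercivity of the quadratic part) together with uniqueness of the minimizer then gives $(v,V)=(u,U)$.

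The only subtle point, and what I expect to be the main thing to be careful with, is invoking Appendix A cleanly: one needs both that $(u,U)$ is actually the \emph{unique} minimizer of $F$ over $H^1(\Omega)\times\Pi$ (so that $F(u,U)=F(v,V)$ implies $(v,V)=(u,U)$), and that the quadratic identity relating $F$ and $G_a$ survives the precise form of the Robin/Neumann splitting in \eqref{robin_kForward}--\eqref{no_flux_off_electrodesForward}. Everything else is a direct substitution of $a=\sigma|\nabla v|$ and an application of $2|\nabla u||\nabla v|\leq|\nabla u|^2+|\nabla v|^2$.
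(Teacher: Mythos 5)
Your proposal is correct and follows essentially the same route as the paper: write $G_a(\cdot,\cdot)=\tfrac12\int_\Omega a|\nabla v|\,dx+F_{a/|\nabla v|}(\cdot,\cdot)$ up to the Young-type bound $\int_\Omega a|\nabla u|\,dx\le\tfrac12\int_\Omega\sigma|\nabla v|^2dx+\tfrac12\int_\Omega\sigma|\nabla u|^2dx$, invoke the minimizing property of $(u,U)$ for $F_{a/|\nabla v|}$ from Appendix A, and settle the equality case by the uniqueness of that minimizer. The only (immaterial) difference is that you apply AM--GM pointwise where the paper first uses Cauchy--Schwarz in $L^2$ and then AM--GM on the two resulting integrals; both yield the identical intermediate estimate.
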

\begin{proof}
Let $V\in\Pi$ be arbitrary. Since $(u,U)$ is a global minimizer of $F_\sigma$ as in \eqref{fowardFunctional} with $\sigma=a/|\nabla v|$ as shown in Theorem \ref{existence}, we have  the inequality:
\begin{align}
G_a(v,V)&=\int_\Omega a|\nabla v|dx + \frac{1}{2}\sum_{k=0}^N\left[ \int_{e_k} \frac{1}{z_k}(v-V_k)^2d s- 2I_k V_k\right]\nonumber\\
&=\frac{1}{2}\int_\Omega a|\nabla v|dx + F_{\frac{a}{|\nabla v|}}(v,V)\nonumber\\
&\geq\frac{1}{2}\int_\Omega a|\nabla v|dx + F_{\frac{a}{|\nabla v|}}(u,U).\label{estim1}
\end{align}
Writing
\begin{align*}
\int_\Omega a|\nabla u|dx&=\int_\Omega \left[\frac{a}{|\nabla v|}\right]^{\frac{1}{2}}|\nabla v|\left[\frac{a}{|\nabla v|}\right]^{\frac{1}{2}}|\nabla u|dx\\ &\leq\left(\int_\Omega \frac{a}{|\nabla v|}|\nabla v|^2dx\right)^{\frac{1}{2}}\left(\int_\Omega\frac{a}{|\nabla v|}|\nabla u|^2dx\right)^{\frac{1}{2}}\\
&\leq\frac{1}{2}\int_\Omega a|\nabla v|dx + \frac{1}{2} \int_\Omega \frac{a}{|\nabla v|}|\nabla u|^2dx,
\end{align*}we also obtain

\begin{align}
G_a(u,U)&=\int_\Omega a|\nabla u|dx +\frac{1}{2}\sum_{k=0}^N\left[\int_{e_k} \frac{1}{z_k} (u-U_k)^2d s- 2I_k U_k\right]\nonumber\\
&\leq\frac{1}{2}\int_\Omega a|\nabla v|dx + \frac{1}{2} \int_\Omega \frac{a}{|\nabla v|}|\nabla u|^2dx + \frac{1}{2}\sum_{k=0}^N\left[ \int_{e_k} \frac{1}{z_k} (u-U_k)^2d s- 2I_k U_k\right]\nonumber\\
&=\frac{1}{2}\int_\Omega a|\nabla v|dx+ F_{\frac{a}{|\nabla v|}}(u,U).\label{estim2}
\end{align}

From \eqref{estim1} and \eqref{estim2} we conclude \eqref{decreasing_1LaplaceFunctional}.
Moreover, if the equality holds in \eqref{decreasing_1LaplaceFunctional} then equality holds in \eqref{estim1}, and thus
\begin{align}\label{alsoMinimizer}
F_{a/|\nabla v|}(u,U)=F_{a/|\nabla v|}(v,V).
\end{align}
Since $(u,U)$ is a solution to the forward problem (for $\sigma=a/|\nabla v|$) it is also a
global minimizer of $F_{a/|\nabla v|}$ over $H^1(\Omega)\times \Pi$. But \eqref{alsoMinimizer} shows that
$(v,V)$ is also a global minimizer for $F_{a/|\nabla v|}$. Now the uniqueness of the global minimizers in Theorem \ref{existence} (for $\sigma=a/|\nabla v|$) yields $(u,U)=(v,V)$.

\end{proof}

{\bf Algorithm:} We assume the magnitude $a$ of the current density satisfies
\begin{align}
\mbox{essinf}(a)>0.
\end{align}

Let $\epsilon>0$ be the lower bound in \eqref{lowerbound_sigma}, and $\delta>0$ a measure of error to be used in the stopping criteria.

\begin{itemize}
\item Step 1: Solve (\ref{conductivity_eq}, \ref{robin_kForward}, \ref{inject_kForward}) and (\ref{no_flux_off_electrodesForward}) for $\sigma=1$, and let $u_0$ be its  unique solution. Define
    $$\sigma_1:=\min\left\{\max\left\{\frac{a}{|\nabla u_0|},\epsilon\right\},\frac{1}{\epsilon}\right\};$$
\item Step 2: For $\sigma_n$ given: Solve (\ref{conductivity_eq}, \ref{robin_kForward}, \ref{inject_kForward}) and (\ref{no_flux_off_electrodesForward}) for the unique solution $u_{n}$;
\item Step 3: If\[\|\nabla u_n- \nabla u_{n-1}\|_{C(\overline\Omega)}>\delta \frac{\epsilon}{\mbox{essinf}a},\]then define
\begin{align}\label{stop}
\sigma_{n+1}:=\min\left\{\max\left\{\frac{a}{|\nabla u_{n}|},\epsilon\right\},\frac{1}{\epsilon}\right\}
\end{align}and repeat Step 2;
\item Else STOP.
\end{itemize}

\section{Numerical Implementations}
We illustrate the theoretical results on a numerical simulation in two dimensions.

\subsection{An algorithm for the forward problem}
Given a current pattern $I \in \Pi$ and a set of surface electrodes $e_k$ with impedances $z_k$ (taken to be constant)for $k = 0, 1, \ldots, N$ satisfying \eqref{common_lower_impedance}, our iterative
algorithm consists in solving the forward problem \eqref{conductivity_eq}, \eqref{robin_kForward}, \eqref{inject_kForward}and \eqref{no_flux_off_electrodesForward} for an updated  conductivity at each step.

A piecewise linear (spline) approximation of the solution to the forward problem is sought on an uniform triangulation of the unit box $[0,1]\times[0,1]$ as shown in figure \ref{fig_UnitBox}.

\begin{figure}[ht]\centerline{\vbox{\hbox{\includegraphics[width=6cm]{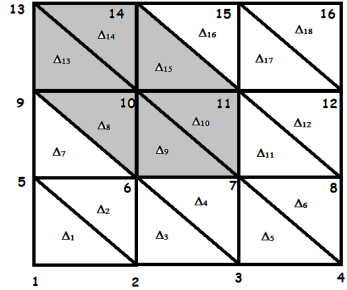}}}} \caption{The uniform triangulated unit box with $m=16$ nodes and $18$ triangles. The shaded triangles are adjacent to the $10$-th node. Note that the corner node 1 has one adjacent triangle, and nodes 4, 13 and 16 have only two adjacent triangles.}\label{fig_UnitBox}\end{figure}

For a (square) number $m$ of grid nodes, let $T_l$ be the set of planes supported in the $l$-th triangle $\Delta_l$, for $l = 1, 2, \ldots, 2(\sqrt{m}-1)^2$. More precisely, when $l= odd$,
\begin{equation}\nonumber
T_{l} = \left\{1 - \frac{1}{h}(x - x_{k_l}) - \frac{1}{h}(y - y_{k_l}), \frac{1}{h}(x - x_{k_l}), \frac{1}{h}(y - y_{k_l})\right\},\; (x,y)\in \Delta_l,
\end{equation}
where $(x_{k_l}, y_{k_l})$ is the southwest grid point of the square in which $\Delta_l$ is inscribed, and $h$ is the length of the side of the square.

When $l=even$,
\begin{equation}\nonumber
T_{l} = \left\{1 + \frac{1}{h}(x - x_{r_l}) + \frac{1}{h}(y - y_{r_l}), -\frac{1}{h}(y - y_{r_l}), -\frac{1}{h}(x - x_{r_l})\right\},\;(x,y) \in \Delta_l,
\end{equation}
where $(x_{r_l}, y_{r_l})$ is the northeast grid point of the square in which $\Delta_l$ is inscribed, and $h$ is the length of the side of the square. For example, in Figure \ref{fig_UnitBox} the triangle $\Delta_{11}$ lies in a square whose southwest grid point position is $(x_7, y_7)$ and the northeast grid point location is $(x_{12},y_{12})$.

We seek an approximation to the solution of the forward problem \eqref{conductivity_eq}, \eqref{robin_kForward}, \eqref{inject_kForward}, and \eqref{no_flux_off_electrodesForward} in the form
\begin{equation}\label{equation:assume_u_p}
u(x,y) \approx \sum_{j=1}^m u_j\psi_j(x,y),
\end{equation}
where $\psi_j$ is the sum over those planes $T_l$'s, that are adjacent to the $j$-th node in the unit box, see figure \ref{fig_UnitBox}. By substituting (\ref{equation:assume_u_p}) into (\ref{Gateaux}), and by selecting $v = \psi_j$, for  $j = 1, \ldots, m$, and $V \equiv \vec{0}$, we get the set of equations
\begin{equation}\label{equation:system1}
\int_\Omega \sigma\nabla u\cdot \nabla \psi_j dxdy + \sum_{k=0}^N\frac{1}{z_k}\int_{e_k}(u-U_k)\psi_j ds = 0, \quad \forall j = 1, \ldots, m,
\end{equation}
which is augmented with the second set of equations
\begin{equation}\label{equation:system2}
-\sum_{k=0}^N\frac{1}{z_k}\int_{e_k}(u-U_k)V_k^j ds = \sum_{k=0}^NI_kV_k^j, \quad \forall j = 0, \ldots N-1
\end{equation} obtained by setting $v\equiv 0$ and $V^j_k = 1$ whenever $k = j$ for $k = 0, \ldots, N-1$, and $V^j_N = -1$ for $j = 0, \ldots N-1$.

Note that forming $V$ in this fashion is equivalent to choosing for each $j = 0, 1, \ldots, N-1$ the $j$-th vector for the $j$-th equation in (\ref{equation:system2}) from the set
\begin{equation}\nonumber
\left\{
\left[
\begin{array}{c}
1 \\ 0 \\ 0 \\ \vdots \\ 0 \\ -1
\end{array}
\right],
\left[\begin{array}{c}
0 \\ 1 \\ 0 \\ \vdots \\ 0 \\ -1
\end{array}
\right],
\cdots,
\left[
\begin{array}{c}
0 \\ 0 \\ \vdots \\ 0 \\ 1 \\ -1
\end{array}
\right]
\right\},
\end{equation}
which is a basis for $\Pi$,  and then setting
\begin{equation}\nonumber
U_N = -\sum_{k=0}^{N-1} U_k.
\end{equation}
The values of $\{u_1, u_2, \ldots, u_m, U_0, U_1, \ldots, U_{N-1}\}$ are then solutions to the linear system
\begin{equation}\label{equation:linear_system}
\left[
\begin{array}{cc}
\Lambda & \Psi \\
\Psi^T & \Upsilon
\end{array}
\right]
\left[
\begin{array}{c}
u_1 \\ u_2 \\ \vdots \\ u_m \\
U_0 \\ U_1 \\ \vdots \\ U_{N-1}
\end{array}
\right] =
\left[
\begin{array}{c}
0 \\ 0 \\ \vdots \\ 0 \\
I_0 - I_N \\ I_1 - I_N \\ \vdots \\ I_{N-1} - I_N
\end{array}
\right]
\end{equation}
where the entries of $\Lambda$ are
\begin{equation}\nonumber
\Lambda_{i, j} = \int_\Omega \sigma\nabla\psi_j \cdot \nabla \psi_i dxdy + \sum_{k=0}^N\frac{1}{z_k}\int_{e_k}\psi_j\psi_i ds, \quad i,j = 1, 2, \ldots, m,
\end{equation}
the entries of $\Psi$ are
\begin{equation}\nonumber
\Psi_{i, j} = \frac{1}{z_N}\int_{e_N}\psi_i ds -\frac{1}{z_k}\int_{e_k}\psi_i ds, \quad  i = 1, 2, \ldots, m,\& ~ k = 0, \ldots, N-1,
\end{equation}
and
\begin{equation}\nonumber
\Upsilon =
\left[
\begin{array}{cccc}
\frac{|e_0|}{z_0} + \frac{|e_N|}{z_N} & \frac{|e_1|}{z_1} &\cdots & \frac{|e_{N-1}|}{z_{N-1}}\\
\frac{|e_0|}{z_0} &\frac{|e_1|}{z_1} + \frac{|e_N|}{z_N} & \cdots & \frac{|e_{N-1}|}{z_{N-1}} \\
\vdots & \vdots & \ddots & \vdots \\
\frac{|e_0|}{z_0} &\cdots & \frac{|e_{N-2}|}{z_{N-2}} &\frac{|e_{N-1}|}{z_{N-1}}+\frac{|e_N|}{z_N}
\end{array}
\right].
\end{equation}In the matrix above $|e_j|$denotes the surface area of the electrode, for $j=0,...,N$.

For other numerical schemes for solving the forward problem we refer to \cite{Vauhkonen99}.

\subsection{Simulating the interior data}

We consider a simulated planar conductivity  $\sigma$ (which models the cross section of a torso) embedded in the unit box $[0,1]\times[0,1]$; see Figure \ref{fig:torso_truth} on the left. The values of the conductivity range from $1.0 ~S/m$ to $1.8~S/m$.
\begin{figure}[ht]\label{fig:torso_truth}
\centerline{\vbox{\hbox{\epsfxsize 6cm\epsfbox{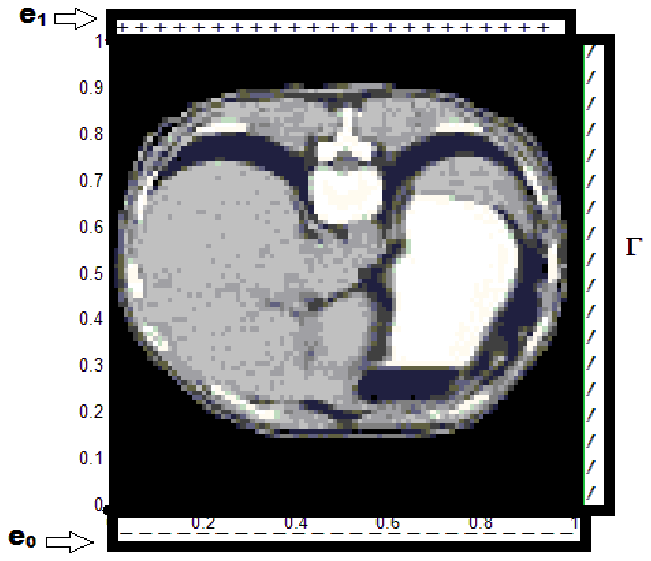}\epsfxsize
7cm\epsfbox{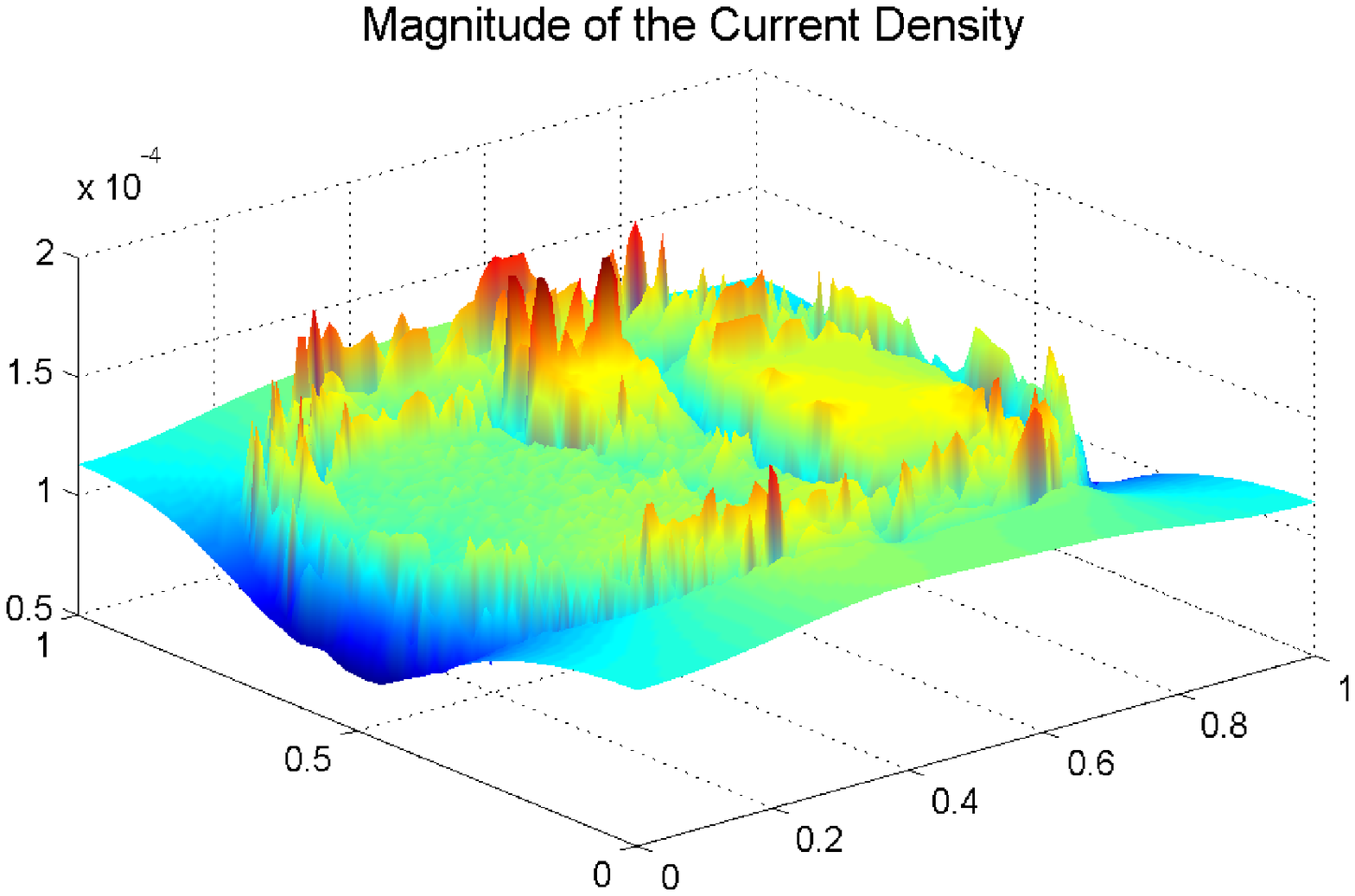}}}} \caption{The exact conductivity with the electrode set up (left).
The simulated magnitude $a$ of the current density field (right).}
\end{figure}

Two currents $-I_0=I_1 = 3~mA $ are respectively injected/extracted through the electrodes $$e_0 = \left\{(x,y)\in [0,1]\times [0,1] : ~y = 0 \right\} \quad \mbox{and} \quad e_1 = \left\{(x,y)\in [0,1]\times[0,1]:~y = 1\right\}$$
of equal impedances $z_0 = z_1 = 8.3~m\Omega \cdot m^2$.

For the given $\sigma$ we solve  the forward problem \eqref{conductivity_eq}, \eqref{robin_kForward}, \eqref{inject_kForward},\eqref{no_flux_off_electrodesForward} for $(u,U)$.  The interior data of the magnitude $a$ of the current density field is defined by $a:=\sigma|\nabla u|$; see Figure \ref{fig:torso_truth} on the right.

\subsection{Numerical reconstruction of a simulated conductivity}

Knowing the injected currents $I_0$ and $I_1$, the electrode impedances $z_0$ and $z_1$, and the corresponding magnitude $a$ of the current density  we find an approximate minimizer of $G_a$ via the iterative algorithm in section \ref{algorithm}. The iterations start with the guess $\sigma_0\equiv 1$. An approximate solution $v$ is computed on a $90 \times 90$ grid.  The stopping criterion \eqref{stop} for this experiment used $\delta = 10^{-7}$, and was attained with 320 iterations. An intermediate conductivity  $\sigma_v:=a/|\nabla v|$  is computed using the computed minimizer $v$, see Figure \ref{fig:min_sigma}.

\begin{figure}[h]
\begin{center}
\resizebox{10cm}{!}{\includegraphics{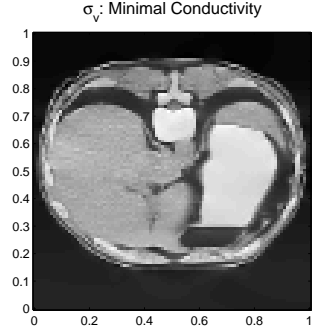}}%
\caption{Without any boundary measurement an intermediate conductivity can be recovered}%
\label{fig:min_sigma}
\end{center}
\end{figure}

We note that in this example $|J|\geq 0.5 >0$ everywhere, thus all the level sets of $u$ are connected. It follows from the arguments in Section 3 that there is a unique $\varphi$ such that $u(x)=\varphi(v(x))$ and
\begin{align*}
\sigma(x)=\frac{1}{\varphi'(v(x))}\sigma_v(x),\quad x\in\Omega.
\end{align*}
The function $\varphi$ can be determined from knowledge of $u$ on the curve $\Gamma = \{(1,y): 0\leq y\leq 1\}$, which connects the two electrodes. More precisely, for each point on  $\Gamma$ the function $\varphi$ maps the computed value of $v$ to the measured value of $u|_\Gamma$ at the same point.  Figure \ref{fig:voltage_gamma} on the left shows the resulting $\varphi$ on the range $v(\Gamma)$.  Figure \ref{fig:voltage_gamma} on the right shows  $1/\varphi'\circ v$, which is the scaling factor needed to obtain the true conductivity $\sigma$.

\begin{figure}[h]
\begin{center}
\resizebox{8cm}{!}{\includegraphics{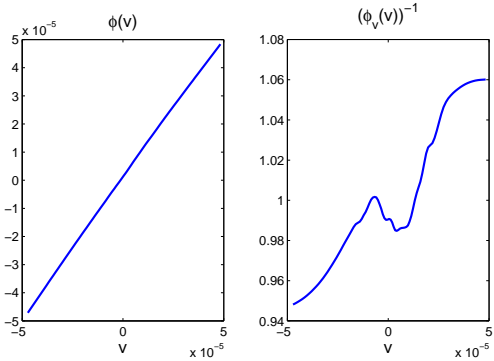}}
\caption{Graph of the function $\varphi$  (left) vs. graph of $1/(\varphi'\circ v)$  (right).}
\label{fig:voltage_gamma}
\end{center}
\end{figure}

\begin{figure}[ht]
\centerline{\vbox{\hbox{\epsfxsize 10cm\epsfbox{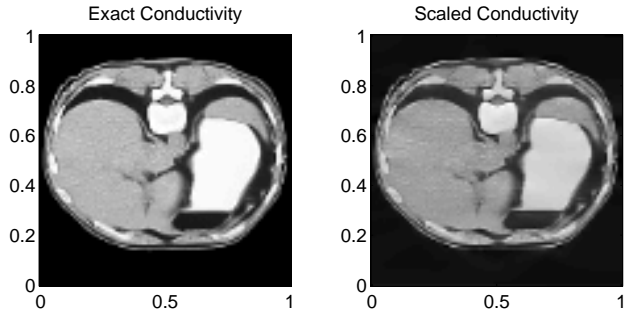}}}}
\caption{Exact conductivity (left) vs. Reconstructed conductivity (right)}
\label{conductivities6.5}
\end{figure}

In Figure \ref{conductivities6.5} the reconstructed conductivity $\sigma$ is shown on the right against the exact conductivity on the left. The $L_2$ error of the reconstruction is $0.04$.

\section*{Acknowledgments} We are grateful to the anonymous referees for their valuable comments.
In particular one of the comments uncovered a gap in a previous version of the manuscript,
and another comment suggested that our arguments would also work for the Neumann problem (see
the remark at the end of Section 3). The work of
A. Tamasan has been supported by the NSF Grant DMS-1312883, as was that of J. Veras as part of his
Ph.D. research at the University of Central Florida. The work of A. Nachman
has been supported by the NSERC Discovery Grant 250240.

\appendix

\section{A minimization approach for the Complete Electrode Model}\label{variationalForward}
\setcounter{section}{1}

In this appendix we show solvability of the forward problem for the Complete Electrode Model of
\cite{somersaloCheneyIsaacson} by recasting it into a minimization problem.
While this approach is less general than the one given in \cite{somersaloCheneyIsaacson}
(we assume a real valued conductivity and positive electrode impedances), it explains
how we are led to introduce the functional \eqref{1LaplFunctional} in the solution of the inverse problem.
For existence and uniqueness of solutions of the forward problem,
the conductivity and electrode impedances need not be smooth: $\sigma\in L^\infty(\Omega)$ and $z_k\in L^\infty(e_k)$
satisfy
\begin{align}\label{sigma,z_k>0}
\text{essinf}_{\Omega}\sigma \geq \epsilon>0,\quad\text{essinf}_{e_k}{z_k}\geq\epsilon>0,~~k=0,...,N.
\end{align}
Let $H^1(\Omega)$ be the space of functions which together with their gradients lie in $L^2(\Omega)$, and $\Pi$ be the hyperplane in \eqref{normalization}. We seek weak solutions to \eqref{conductivity_eq}, \eqref{robin_kForward} \eqref{inject_kForward}, \eqref{no_flux_off_electrodesForward}, and \eqref{sum0inject} in the Hilbert space $H^1(\Omega)\times \Pi$, endowed with the product
$$\langle(u, U),(v,V)\rangle:=\int_{\Omega} uvdx +\int_\Omega \nabla u\cdot\nabla v dx +\sum_{k=0}^NU_kV_k,$$
and the induced norm
\begin{align}\label{norm}
\|(u,U)\|:=\langle (u,U),(u,U)\rangle^{1/2}.
\end{align}

We'll need the following variant of the Poicar\'e inequality, suitable for the complete electrode model.
\begin{proposition}\label{lemma_Coercivity}Let $\Omega\subset\mathbb{R}^n$, $n\geq2$, be an open,
connected, bounded domain with Lipschitz boundary $\partial\Omega$, and $\Pi$ be the hyperplane in \eqref{normalization}. For $k=0,...,N$, let $e_k\subset\partial\Omega$
be disjoint subsets of the boundary of positive induced surface measure: $|e_k|>0$.

There exists a constant $C>0$, dependent only on $\Omega$ and the $e_k$'s, such that for all $u\in H^1(\Omega)$ and all $U=(U_0,...,U_N)\in \Pi$, we have
\begin{align}\label{Poincare}
\int_{\Omega} u^2dx+\sum_{k=0}^N U_k^2\leq C\left(\int_{\Omega}|\nabla u|^2dx+\sum_{k=0}^N \int_{e_k}(u-U_k)^2ds\right).
\end{align}
\end{proposition}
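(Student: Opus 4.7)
The natural route is a compactness–contradiction argument in the spirit of the standard Poincar\'e inequality. Suppose no such constant $C$ exists. Then there is a sequence $\{(u_n,U^n)\}\subset H^1(\Omega)\times\Pi$ with
\[
\int_\Omega u_n^2\,dx+\sum_{k=0}^N (U_k^n)^2=1,\qquad \int_\Omega|\nabla u_n|^2\,dx+\sum_{k=0}^N\int_{e_k}(u_n-U_k^n)^2\,ds<\frac1n.
\]
The first equality bounds $u_n$ in $L^2(\Omega)$ and $U^n$ in $\mathbb{R}^{N+1}$; the second bounds $\nabla u_n$ in $L^2(\Omega)$. Hence $\{u_n\}$ is bounded in $H^1(\Omega)$ and $\{U^n\}$ is bounded in the finite-dimensional space $\Pi$.

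Passing to subsequences, I would use Rellich--Kondrachov to extract $u_n\to u$ strongly in $L^2(\Omega)$ and $u_n\rightharpoonup u$ weakly in $H^1(\Omega)$; by the compactness of the trace operator $H^1(\Omega)\to L^2(\partial\Omega)$ (valid since $\partial\Omega$ is Lipschitz) we also get $u_n|_{\partial\Omega}\to u|_{\partial\Omega}$ strongly in $L^2(\partial\Omega)$. Along a further subsequence, $U^n\to U$ in $\Pi$, since $\Pi$ is closed. Weak lower semicontinuity of the Dirichlet energy combined with $\int|\nabla u_n|^2\to 0$ forces $\nabla u=0$ a.e., so $u\equiv c$ is constant on the connected domain $\Omega$.

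Next I would identify $c$ using the boundary term. Since $u_n|_{e_k}\to c$ in $L^2(e_k)$ and $U_k^n\to U_k$, and $\int_{e_k}(u_n-U_k^n)^2\,ds\to 0$, I conclude $\int_{e_k}(c-U_k)^2\,ds=0$; because $|e_k|>0$, this gives $U_k=c$ for each $k=0,\dots,N$. The constraint $U\in\Pi$ then reads $(N+1)c=\sum_k U_k=0$, forcing $c=0$ and hence every $U_k=0$. But then $\int_\Omega u^2\,dx+\sum_k U_k^2=0$, contradicting the normalization $\int_\Omega u^2\,dx+\sum_k U_k^2=\lim_n(\int_\Omega u_n^2\,dx+\sum_k (U_k^n)^2)=1$ obtained from strong $L^2$ and $\mathbb{R}^{N+1}$ convergence.

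The only step requiring genuine care is the compactness of the trace map from $H^1(\Omega)$ to $L^2(\partial\Omega)$, so that the boundary integrals converge under weak $H^1$-convergence; this is where the Lipschitz regularity of $\partial\Omega$ is essential. The hypothesis that $\Omega$ is connected enters in the conclusion that $\nabla u=0$ implies $u$ is a single constant, and the normalization $U\in\Pi$ is what ties the common constant on the electrodes to zero. All other steps are routine applications of Rellich--Kondrachov and lower semicontinuity.
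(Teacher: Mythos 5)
Your proof is correct and follows essentially the same compactness--contradiction strategy as the paper: normalize a hypothetical violating sequence, extract a weakly convergent subsequence, use lower semicontinuity to force $\nabla u=0$, identify the constant with the common electrode value, and let the constraint $U\in\Pi$ kill it. The only technical divergence is that you invoke compactness of the trace operator $H^1(\Omega)\to L^2(\partial\Omega)$ to pass to the limit in the electrode terms, whereas the paper avoids this by expanding $\int_{e_k}(u_n-U_k^*)^2\,ds$ algebraically and using only boundedness plus weak continuity of the trace; both are valid on a Lipschitz domain.
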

\begin{proof} We will show that
\begin{align}\label{infimum_coercivity}
\inf_{ (u,U)\in H^1(\Omega)\times\Pi}\frac{\int_{\Omega}|\nabla u|^2dx+\sum_{k=0}^N \int_{e_k}(u-U_k)^2ds}{\int_\Omega |\nabla u|^2dx+\int_{\Omega} u^2dx+\sum_{k=0}^N U_k^2}=:\kappa >0.\end{align}

We reason by contradiction: Assume the infimum in \eqref{infimum_coercivity} is zero. Without loss of generality (else normalize to 1), there exists a sequence $\{(u_n,U^n)\}$ in the unit sphere of $ H^1(\Omega)\times\Pi$,
$\|(u_n,U^n)\|=1$, and such that
\begin{align}
&0=\lim_{n\to\infty}\int_{\Omega}|\nabla u_n|^2dx\label{gradsTo0},\\
&0=\lim_{n\to\infty}\int_{e_k}(u_n-U_k^n)^2, \quad\text{for } k=0,...,N. \label{converg_on_electrode}
\end{align}
Due to the compactness of the unit sphere in $\Pi$ and of the weakly compactness of the unit sphere in $H^1(\Omega)$ it follows that there exists some $(u_*,U^*)\in H^1(\Omega)\times\Pi$ with
\begin{align}\label{limitOnSphere}
\|(u_*,U^*)\|=1,
\end{align}such that, on a subsequence (relabeled for simplicity),
\begin{align}
&u_n\rightharpoonup u_*\quad\text{in } H^1(\Omega),\label{weakConvergence}\\
&U^n\to U^*\quad\text{in }\Pi,\quad\text{as } n\to\infty.\label{convergence_inU}
\end{align}

Since the sequence $\{u_n\}$ is bounded in $H^1(\Omega)$, the trace theorem implies that $u_n|_{e_k}$ is (uniformly in $n$) bounded in $H^{1/2}(e_k)$, hence also in $L^1(e_k)$, for each $k=0,...,N$.
Using \eqref{converg_on_electrode} and \eqref{convergence_inU} in
\begin{align*}
\int_{e_k} (u_n- U^*_k)^2 ds=&\int_{e_k} (u_n - U^n_k)^2 ds + 2(U^n_k-U^*_k)\int_{e_k}u_n ds\\
 &+|e_k|\left[(U^*_k)^2-(U^n_k)^2)\right],
\end{align*}we obtain
$u_n|_{e_k}\to U^*_k\quad \text{in }L^2(e_k)$. Since $u_n|_{e_k}\rightharpoonup u_*|_{e_k}$, we conclude that
\begin{align}\label{electrode_Conclusion}
u_*|_{e_k}=U^*_k\quad \text{for each } k=0,...,N.
\end{align}

Now using \eqref{gradsTo0} and \eqref{weakConvergence}
\begin{align*}
0\leq &\int_\Omega |\nabla(u_n-u_*)|^2dx=\int_{\Omega}|\nabla u_n|^2dx-2\int_{\Omega} \nabla u_n\cdot\nabla u_*+\int_\Omega|\nabla u_*|^2dx\\
&\longrightarrow-\int_\Omega |\nabla u_*|^2 dx,\quad\text{as }n\to\infty,
\end{align*}
and, since $\Omega$ is connected,
\begin{align}\label{u*=const.}
u_*\equiv const. \;\text{in }\overline{\Omega}.
\end{align}

From \eqref{electrode_Conclusion} and \eqref{u*=const.} we conclude that $u_*$ restricts to the same constant on each electrode, and thus
$U^*_0=U^*_1=...=U^*_N=u_*.$ Since $U^*\subset \Pi$, we must have $U^*=\langle0,...,0\rangle$ and then $u_*\equiv0$, thus contradicting \eqref{limitOnSphere}.

\end{proof}

\begin{proposition}\label{quadraticProperties} Let $\Omega$, $\Pi$, and $e_k\subset \partial\Omega$, $k=0,...,N$ be as in Proposition \ref{lemma_Coercivity}.
For $\sigma$, and $z_k$, $k=0,...,N$ satisfying \eqref{sigma,z_k>0}, and $I=(I_0,...,I_N)\in\mathbb{R}^{N+1}$, let us consider the quadratic functional $F_\sigma:H^1(\Omega)\times\Pi\to\mathbb{R}$ defined by
\begin{align}\label{fowardFunctional}
F_\sigma(u,U):=\frac{1}{2}\int_{\partial\Omega}\sigma|\nabla u|^2dx +\frac{1}{2}\sum_{k=0}^N\int_{e_k}\frac{1}{z_k}(u-U_k)^2ds-\sum_{k=0}^N I_kU_k.
\end{align}
Then

(i) $F_\sigma$ is strictly convex

(ii) $F_\sigma$ is Gateaux differentiable in $H^1(\Omega)\times\Pi$, and the derivative at $(u,U)$ in the direction $(v,V)$ is given by
\begin{align}\label{Gateaux}
\langle DF_\sigma(u,U);(v,V)\rangle&=\int_{\Omega}\sigma\nabla u\cdot \nabla vdx +\sum_{k=0}^N\int_{e_k}\frac{1}{z_k}(u-U_k)(v-V_k)ds\nonumber\\&\quad -\sum_{k=0}^NI_kV_k
\end{align}
(iii) $F_\sigma$ is coercive, more precisely,
\begin{align}\label{coerc_Expanded}
F_\sigma(u,U)\geq \frac{c}{2}\|(u,U)\| - \frac{1}{2c}\sum_{k=0}^N I_k^2,
\end{align}
for some constant $c>0$ dependent on the lower bound  $\epsilon$ in \eqref{sigma,z_k>0}, and $\kappa$ in \eqref{infimum_coercivity}.
\end{proposition}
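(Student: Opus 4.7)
I would verify the three assertions in turn. All reduce to routine manipulation of a quadratic‑plus‑linear functional, combined with the Poincar\'e‑type inequality of Proposition~\ref{lemma_Coercivity}.

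\emph{Part (i).} I would decompose $F_\sigma = Q - L$, where
\[
Q(u,U) := \tfrac12\int_\Omega \sigma|\nabla u|^2\,dx + \tfrac12\sum_{k=0}^N\int_{e_k}\frac{1}{z_k}(u-U_k)^2\,ds,\qquad L(u,U) := \sum_{k=0}^N I_k U_k.
\]
Since $L$ is linear, strict convexity of $F_\sigma$ is equivalent to strict convexity of $Q$, which in turn amounts to showing that the symmetric bilinear form associated with $Q$ is positive definite on $H^1(\Omega)\times\Pi$. Now $\sigma\geq\epsilon$ a.e.\ by \eqref{sigma,z_k>0}, and since $z_k\in L^\infty(e_k)$ the reciprocal $1/z_k$ is bounded below by $1/\|z_k\|_{L^\infty(e_k)}>0$; feeding these two pointwise lower bounds into the Poincar\'e inequality \eqref{Poincare} yields $Q(w,W)\geq \tilde c\,\|(w,W)\|^2$ for some $\tilde c>0$, so $Q(w,W)>0$ whenever $(w,W)\neq(0,0)$, and strict convexity follows.

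\emph{Part (ii).} This is a direct computation. Expanding $F_\sigma\bigl((u,U)+t(v,V)\bigr)$ in the scalar parameter $t\in\mathbb{R}$, the expansion terminates at order~$2$ (since each term of $F_\sigma$ is at most quadratic), and the coefficient of $t$ is precisely the right‑hand side of \eqref{Gateaux}. Differentiating in $t$ at $t=0$ delivers the stated Gâteaux derivative; linearity and boundedness in $(v,V)$ (needed to interpret it as an element of the dual of $H^1(\Omega)\times\Pi$) follow from Cauchy--Schwarz combined with the trace theorem.

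\emph{Part (iii).} Using again the lower bounds on $\sigma$ and $1/z_k$ from part~(i), the Poincar\'e inequality~\eqref{Poincare} supplies a constant $C_0>0$ (depending on $\epsilon$ and on $\kappa$ in~\eqref{infimum_coercivity}) for which $Q(u,U)\geq C_0\|(u,U)\|^2$. To absorb the linear term, I would apply Cauchy--Schwarz in $\mathbb{R}^{N+1}$ followed by Young's inequality with parameter tuned to $C_0$,
\[
\Bigl|\sum_{k=0}^N I_k U_k\Bigr|\leq |I|\,\|(u,U)\|\leq \frac{C_0}{2}\|(u,U)\|^2 + \frac{1}{2C_0}\sum_{k=0}^N I_k^2,
\]
which, after setting $c:=C_0$, yields the coercivity bound \eqref{coerc_Expanded}. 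The only real bookkeeping is tracking the constant $c$ through $\epsilon$ and $\kappa$; none of the steps present a genuine obstacle once Proposition~\ref{lemma_Coercivity} is available.
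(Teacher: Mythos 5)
Your proof is correct and follows essentially the same route as the paper: the Gateaux derivative by direct expansion in $t$, and coercivity from Proposition~\ref{lemma_Coercivity} with the linear term absorbed (you use Cauchy--Schwarz plus Young's inequality where the paper completes the square in the $U_k$'s --- the resulting bound $\frac{c}{2}\|(u,U)\|^2-\frac{1}{2c}\sum_k I_k^2$ is the same). In part (i) you are in fact more careful than the paper, which simply asserts that each quadratic term is strictly convex; individually they are only convex (the Dirichlet term vanishes on constants, the electrode term on pairs with $u|_{e_k}=U_k$), and it is precisely the Poincar\'e-type inequality \eqref{Poincare} that makes their sum positive definite on $H^1(\Omega)\times\Pi$, as your argument correctly records.
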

\begin{proof}(i) The functional has two quadratic terms, each strictly convex, and one linear term, hence the sum is strictly convex.
(ii) The Gateaux differentiability and the formula \eqref{Gateaux} follow directly from the definition of $F_\sigma$.

(iii) Proposition \ref{lemma_Coercivity} above shows that
\begin{align*}
F_\sigma(u,U)\geq c\|(u,U)\|^2-\sum_{k=0}^NI_kU_k,
\end{align*}where
$c=\frac{\kappa}{2} \epsilon >0.$
By completing the square one obtains
\begin{align*}
F_\sigma(u,U)&\geq c\|u\|_{H^1(\Omega)}^2+c\sum_{k=0}^N\left(U_k-\frac{I_k}{2c}\right)^2 -\frac{1}{4c}\sum_{k=0}^N I_k^2\\
&\geq c\|u\|_{H^1(\Omega)}^2+c\sum_{k=0}^N\left(\frac{1}{2}U_k^2-\frac{I_k^2}{4c^2}\right) -\frac{1}{4c}\sum_{k=0}^N I_k^2\\
&\geq \frac{c}{2}\|(u,U)\|^2-\frac{1}{2c}\sum_{k=0}^N I_k^2
\end{align*}\end{proof}

The proposition below revisits \cite[Proposition 3.1.]{somersaloCheneyIsaacson} and separates the role of the conservation of charge condition \eqref{sum0inject}. This becomes important in our  minimization approach, where we shall see that $F_\sigma$ has a unique minimizer independently of the condition of \eqref{sum0inject} being satisfied. However, it is only for currents satisfying \eqref{sum0inject}, that the minimizer satisfies \eqref{inject_kForward}. This result does not use the reality of $\sigma$ and of $z_k$'s. Recall that the Gateaux derivative of $DF_\sigma$ is given in \eqref{Gateaux}.

\begin{proposition}\label{carification} Let $\Omega$, $\Pi$,  $e_k\subset \partial\Omega$, $z_k$, $k=0,...,N$, and $\sigma$ be as in Proposition \ref{quadraticProperties}.

(i) If $(u,U)\in H^1(\Omega)\times\Pi$ is a weak solution to \eqref{conductivity_eq}, \eqref{robin_kForward}, \eqref{inject_kForward} and \eqref{no_flux_off_electrodesForward}, then \eqref{sum0inject} holds and
\begin{align}\label{weakformulationEquivalence}
 \left\langle DF_\sigma(u,U);(v,V)\right\rangle=0,\quad\forall \;(v,V)\in H^1(\Omega)\times\Pi.
\end{align}

(ii) If $(u,U)\in H^1(\Omega)\times\Pi$ satisfies \eqref{weakformulationEquivalence}, then it solves \eqref{conductivity_eq},  \eqref{robin_kForward} and \eqref{no_flux_off_electrodesForward}. In addition, if $I_k$'s satisfy \eqref{sum0inject}, then  \eqref{inject_kForward} also holds.

\end{proposition}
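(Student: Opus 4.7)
\textbf{Proof proposal for Proposition \ref{carification}.} The plan is to use Green's identity (the divergence theorem for the vector field $v\sigma\nabla u$) in both directions, together with carefully chosen test pairs $(v,V)\in H^1(\Omega)\times\Pi$ that isolate each CEM condition. For part (i), \eqref{sum0inject} follows immediately by integrating \eqref{conductivity_eq} over $\Omega$ and applying \eqref{inject_kForward} and \eqref{no_flux_off_electrodesForward}. For arbitrary $(v,V)$, Green's identity (whose volume term vanishes by \eqref{conductivity_eq}) together with \eqref{no_flux_off_electrodesForward} gives
\[
\int_\Omega\sigma\nabla u\cdot\nabla v\,dx=\sum_{k=0}^N\int_{e_k}v\,\sigma\frac{\partial u}{\partial\nu}\,ds.
\]
Using \eqref{robin_kForward} to rewrite $\sigma\partial_\nu u=(U_k-u)/z_k$ on each $e_k$ and substituting into the formula \eqref{Gateaux}, the $v$-terms cancel and what remains is
\[
\langle DF_\sigma(u,U);(v,V)\rangle=-\sum_{k=0}^N V_k\int_{e_k}\frac{u-U_k}{z_k}\,ds-\sum_{k=0}^N I_kV_k.
\]
A second application of \eqref{robin_kForward} followed by \eqref{inject_kForward} shows each inner integral equals $I_k$, and the two sums cancel.

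For part (ii), I would peel off the three conclusions in stages by varying the test pair. First, choosing $v\in C_c^\infty(\Omega)$ and $V=0$ in \eqref{weakformulationEquivalence} yields the weak form of \eqref{conductivity_eq}, which in particular endows $\sigma\nabla u$ with a well-defined normal trace $\sigma\partial_\nu u\in H^{-1/2}(\partial\Omega)$. Second, taking $v\in H^1(\Omega)$ still with $V=0$ and integrating by parts in this weak sense rewrites \eqref{weakformulationEquivalence} as
\[
\left\langle\sigma\frac{\partial u}{\partial\nu}+\sum_{k=0}^N\frac{u-U_k}{z_k}\chi_{e_k},\ v\big|_{\partial\Omega}\right\rangle_{H^{-1/2},H^{1/2}}=0
\]
for all $v\in H^1(\Omega)$; localizing the trace to $\partial\Omega\setminus\bigcup e_k$ extracts \eqref{no_flux_off_electrodesForward}, and localizing to a single electrode extracts \eqref{robin_kForward}. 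Third, setting $v\equiv 0$ and letting $V\in\Pi$ vary, and using \eqref{robin_kForward} to replace $(u-U_k)/z_k$ by $-\sigma\partial_\nu u$, the identity reduces to
\[
\sum_{k=0}^N V_k\!\left(\int_{e_k}\sigma\frac{\partial u}{\partial\nu}\,ds-I_k\right)=0\quad\text{for all }V\in\Pi,
\]
which forces the quantity in parentheses to be independent of $k$. Under hypothesis \eqref{sum0inject}, summing the $N{+}1$ identical values and using the divergence theorem on $\sigma\nabla u$ (the non-electrode flux being zero by the just-proved \eqref{no_flux_off_electrodesForward}) shows the common constant is zero, yielding \eqref{inject_kForward}.

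\textbf{Main obstacle.} The arithmetic in both directions is routine; the genuine technical point in (ii) is the weak-trace interpretation of $\sigma\partial_\nu u$, which has to be handled through the $H^{-1/2}/H^{1/2}$ duality on the Lipschitz boundary for $L^2$ fields with $L^2$ divergence, plus a standard localization of $H^{1/2}$ traces onto the electrodes vs.\ the complementary part of $\partial\Omega$. A secondary asymmetry worth flagging is that charge conservation \eqref{sum0inject} is a \emph{conclusion} of (i) but a \emph{hypothesis} of (ii); this is dictated by the fact that $V$-variations only test against $\Pi$, so the $k$-dependent constant produced above cannot be pinned down without the extra assumption.
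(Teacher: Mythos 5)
Your argument is correct and follows essentially the same route as the paper's: part (i) by Green's formula plus substitution of \eqref{robin_kForward} and \eqref{inject_kForward} into \eqref{Gateaux}, and part (ii) by peeling off \eqref{conductivity_eq}, \eqref{no_flux_off_electrodesForward}, \eqref{robin_kForward} with successive test pairs and then showing that the vector $\alpha_k:=\int_{e_k}\sigma\frac{\partial u}{\partial\nu}\,ds-I_k$ is orthogonal to $\Pi$ and, under \eqref{sum0inject}, also lies in $\Pi$, hence vanishes. Two remarks. First, a sign slip in part (i): by \eqref{robin_kForward} and \eqref{inject_kForward} one has $\int_{e_k}\frac{u-U_k}{z_k}\,ds=-\int_{e_k}\sigma\frac{\partial u}{\partial\nu}\,ds=-I_k$, not $+I_k$; it is with this sign that your remaining expression $-\sum_k V_k\int_{e_k}\frac{u-U_k}{z_k}\,ds-\sum_k I_kV_k$ collapses to zero, so the conclusion stands but the stated value of the inner integral is off by a sign. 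Second, your final test pair $(v,V)=(0,V)$ is actually a small improvement over the paper's proof, which at the corresponding step constructs $v\in H^1(\Omega)$ whose trace equals $V_k$ on each $e_k$ and $0$ off the electrodes; such a piecewise-constant boundary trace with jumps is generally not in $H^{1/2}(\partial\Omega)$, so that test function does not literally exist and would require an approximation argument, whereas your choice reaches the identity $\sum_k V_k\alpha_k=0$ directly once \eqref{robin_kForward} is in hand. The merging of the extraction of \eqref{no_flux_off_electrodesForward} and \eqref{robin_kForward} into a single $H^{-1/2}/H^{1/2}$ localization step is equivalent to the paper's two separate choices of $v$ and equally rigorous.
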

\begin{proof}
(i) Follows from a direct calculation and Green's formula.

(ii) Assume that \eqref{weakformulationEquivalence} holds.

By choosing $v\in H^1_0(\Omega)$ arbitrary and $V=\overrightarrow{0}$ in \eqref{weakformulationEquivalence} we see that
$$\int_\Omega\sigma\nabla u\cdot\nabla vdx=0.$$ Thus $u\in H^1(\Omega)$ is a weak solution of \eqref{conductivity_eq}.

For each fixed $k=0,1,...,N$ keep $V=\overrightarrow{0}$ as above, but now choose $v\in H^1(\Omega)$ arbitrary with $v|_{\partial\Omega\setminus e_k}=0$. A straightforward calculation starting from \eqref{weakformulationEquivalence} shows that
$$\int_{e_k}\frac{1}{z_k}\left( u-U_k+z_k\sigma\frac{\partial u}{\partial \nu}\right)vds =0.$$
Since $v|_{e_k}$ were arbitrary \eqref{robin_kForward} follows.

Now choose $V=\overrightarrow{0}$ as above but $v\in H^1(\Omega)$ arbitrary with $v|_{e_k}=0$ for all $k=0,...,N$. It follows from \eqref{weakformulationEquivalence} that $$\int_{\partial\Omega}\sigma\frac{\partial u}{\partial\nu} v ds=0.$$
Since the trace of $v$ is arbitrary off the electrodes \eqref{no_flux_off_electrodesForward} holds.

Finally, for an arbitrary $V\in\Pi$ choose $v\in H^1(\Omega)$ with the trace
$v=V_k$ on each $e_k$, $k=0,...,N$ and  $v=0$ off the electrodes.
By using the already established relations \eqref{conductivity_eq}, \eqref{robin_kForward}, \eqref{no_flux_off_electrodesForward} and Green's formula in \eqref{weakformulationEquivalence} we obtain
$$\sum_{k=0}^NV_k\left(\int_{e_k}\sigma\frac{\partial u}{\partial\nu}ds-{I_k}\right)=0.$$
On the one hand, by introducing the notation $\overrightarrow\alpha:=\langle\alpha_0,...,\alpha_N\rangle$ with
$$\alpha_k:=\int_{e_k}\sigma\frac{\partial u}{\partial\nu}ds-{I_k},\quad k=0,...,N,$$
we just showed that $\overrightarrow\alpha\perp\Pi$.  Note that so far we have not used the conservation of charge condition \eqref{sum0inject}.

On the other hand, by using \eqref{no_flux_off_electrodesForward}, \eqref{sum0inject}, and \eqref{conductivity_eq}  in the divergence formula, we have
\begin{align*}
\sum_{k=0}^N\alpha_k=\int_{\partial\Omega}\sigma\frac{\partial u}{\partial\nu}ds=\int_\Omega\nabla\cdot \sigma\nabla u dx=0,
\end{align*}which yields $\overrightarrow\alpha\in \Pi$. Therefore $\overrightarrow\alpha\in\Pi^\perp\cap\Pi=\overrightarrow 0$, and \eqref{inject_kForward} holds.

\end{proof}
The following result establishes existence and uniqueness of the weak solution to the foward CEM problem; contrast with the proof of Theorem 3.3 in \cite{somersaloCheneyIsaacson}.
\begin{theorem}\label{existence}
Let $\Omega$, $\Pi$, $e_k\subset \partial\Omega$, $z_k$, for $k=0,...,N$, and $\sigma$  be as in Proposition \ref{quadraticProperties}. Let $F_\sigma:H^1(\Omega)\times\Pi\to\mathbb{R}$ be defined in \eqref{fowardFunctional}.

(i) Then $F_\sigma$ has a unique minimizer $(u,U)\in H^1(\Omega)\times\Pi$. If, in addition, the injected currents $I_k$'s satisfy \eqref{sum0inject} the minimizer is the weak solution of the problem \eqref{conductivity_eq}, \eqref{robin_kForward}, \eqref{inject_kForward}, and \eqref{no_flux_off_electrodesForward}.

(ii) If the problem \eqref{conductivity_eq}, \eqref{robin_kForward}, \eqref{inject_kForward},\eqref{no_flux_off_electrodesForward} has a solution, then it is a minimizer of $F_\sigma$ in the whole space $H^1(\Omega)\times\Pi$ and hence unique. Moreover, the current $I_k$'s satisfy \eqref{sum0inject}.
\end{theorem}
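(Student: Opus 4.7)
The plan is to prove (i) by the direct method of the calculus of variations applied to $F_\sigma$, invoking Proposition \ref{quadraticProperties} and then Proposition \ref{carification} to identify the minimizer with a weak solution, and to prove (ii) by showing that any solution of the CEM is automatically a critical point of the strictly convex functional $F_\sigma$, hence its unique global minimizer.

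\textbf{Existence of a minimizer.} First I would take a minimizing sequence $(u_n,U^n)\in H^1(\Omega)\times \Pi$ with $F_\sigma(u_n,U^n)\to m:=\inf F_\sigma$. The coercivity estimate \eqref{coerc_Expanded} from Proposition \ref{quadraticProperties}(iii) shows that $m>-\infty$ and that $\|(u_n,U^n)\|$ stays bounded (otherwise the right-hand side of \eqref{coerc_Expanded} would blow up). Passing to a subsequence, we obtain $u_n\rightharpoonup u_*$ weakly in $H^1(\Omega)$ and $U^n\to U^*$ in the finite-dimensional space $\Pi$, for some $(u_*,U^*)\in H^1(\Omega)\times \Pi$. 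The functional $F_\sigma$ is the sum of a continuous linear term and two nonnegative quadratic forms; the bulk term $\tfrac{1}{2}\int_\Omega \sigma|\nabla u|^2 dx$ is weakly lower semicontinuous on $H^1$, and the boundary terms $\tfrac12\int_{e_k}z_k^{-1}(u-U_k)^2 ds$ pass to the limit via the compact trace embedding $H^1(\Omega)\hookrightarrow L^2(e_k)$ combined with $U^n_k\to U^*_k$. Hence $F_\sigma(u_*,U^*)\le \liminf_n F_\sigma(u_n,U^n)=m$, so $(u_*,U^*)$ is a minimizer.

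\textbf{Uniqueness and Euler--Lagrange.} Uniqueness is immediate from strict convexity (Proposition \ref{quadraticProperties}(i)): if there were two minimizers, their midpoint would give a strictly smaller value of $F_\sigma$, a contradiction. Since $(u_*,U^*)$ is a global minimizer and $F_\sigma$ is Gateaux differentiable by Proposition \ref{quadraticProperties}(ii), we have
\[
\langle DF_\sigma(u_*,U^*);(v,V)\rangle=0\quad\text{for all }(v,V)\in H^1(\Omega)\times\Pi,
\]
which is exactly \eqref{weakformulationEquivalence}. Proposition \ref{carification}(ii) then gives \eqref{conductivity_eq}, \eqref{robin_kForward}, and \eqref{no_flux_off_electrodesForward}, and, under the additional hypothesis \eqref{sum0inject}, also \eqref{inject_kForward}. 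This completes part (i).

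\textbf{Part (ii).} Conversely, suppose $(u,U)\in H^1(\Omega)\times\Pi$ solves the forward problem \eqref{conductivity_eq}--\eqref{no_flux_off_electrodesForward}. Proposition \ref{carification}(i) yields both the compatibility condition \eqref{sum0inject} and the Euler--Lagrange identity \eqref{weakformulationEquivalence}. Because $F_\sigma$ is strictly convex, any critical point is automatically the unique global minimizer on $H^1(\Omega)\times\Pi$: indeed, strict convexity gives $F_\sigma(w,W)>F_\sigma(u,U)+\langle DF_\sigma(u,U);(w-u,W-U)\rangle=F_\sigma(u,U)$ for every $(w,W)\neq(u,U)$. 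Hence the CEM solution coincides with the unique minimizer produced in part (i), and in particular it is unique. The only step that requires any genuine care is the weak lower semicontinuity of the boundary quadratic form; everything else is a bookkeeping application of Propositions \ref{lemma_Coercivity}, \ref{quadraticProperties}, and \ref{carification}.
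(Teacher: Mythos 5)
Your proposal is correct and follows the same overall skeleton as the paper: direct method via the coercivity estimate \eqref{coerc_Expanded}, weak compactness, identification of the weak limit as the minimizer, uniqueness from strict convexity, the Euler--Lagrange identity \eqref{weakformulationEquivalence}, and Proposition \ref{carification} to pass back and forth between minimizers and weak solutions of the CEM. The one place where you genuinely diverge is the lower semicontinuity step. You argue term by term: weak lower semicontinuity of the Dirichlet-type bulk integral, plus strong convergence of the traces $u_n|_{e_k}\to u_*|_{e_k}$ in $L^2(e_k)$ via compactness of the trace operator, to conclude $F_\sigma(u_*,U^*)\le\liminf F_\sigma(u_n,U^n)$. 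The paper instead exploits convexity directly, writing
$F_\sigma(u_n,U^n)\ge F_\sigma(u_*,U^*)+\langle DF_\sigma(u_*,U^*);(u_n-u_*,U^n-U^*)\rangle$
and noting that the second term vanishes in the limit because $DF_\sigma(u_*,U^*)$ is a bounded linear functional evaluated on a weakly null sequence. The paper's route needs only boundedness (not compactness) of the trace map and is the standard ``convex $+$ Gateaux differentiable $\Rightarrow$ weakly l.s.c.'' argument done by hand; your route is equally valid (the trace $H^1(\Omega)\to L^2(\partial\Omega)$ is indeed compact on Lipschitz domains) and arguably more transparent, though you could also have avoided compactness by observing that the boundary quadratic form is itself convex and strongly continuous, hence weakly l.s.c. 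Part (ii) of your argument matches the paper's exactly.
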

\begin{proof}
(i) Let $$d=\inf_{H^1(\Omega)\times\Pi} F_\sigma(u,U),$$ and consider a minimizing sequence $\{(u_n,U^n)\}$ in $H^1(\Omega)\times\Pi$,
\begin{align}
d\leq F_\sigma(u_n,U^n)\leq d+\frac{1}{n}.
\end{align}
Since
$\inf F_\sigma\geq-\frac{1}{4c}\sum_{k=0}^N I_k^2
$ we have $d\neq-\infty$. Following \eqref{coerc_Expanded},
$$\lim_{\|(u,U)\|\to\infty}F_\sigma(u,U)=\infty.$$ Thus the minimizing sequence  must be bounded, hence weakly compact. In particular, for a subsequence (relabeled for simplicity) there is some $(u_*,U^*)\in H^1(\Omega)\times\Pi$, such that
\begin{align}\label{weakConvMinSequence}
u_n\rightharpoonup u_*\;\text{in }H^1(\Omega),\quad \text{and }U_n\to U^*\;\text{in }\Pi,\;\text{as }n\to\infty.
\end{align}
On the other hand since $F_\sigma$ is convex, and Gateaux differentiable at $(u_*,U^*)$ in the direction $(u_n-u_*,U^n-U^*)$, we have
\begin{align}
F_\sigma(u_n,U^n)\geq F_\sigma(u_*,U^*)+\langle DF_\sigma(u_*,U^*);(u_n-u_*,U^n-U^*)\rangle.
\end{align}
We take the limit as $n\to\infty$. The weak convergence in \eqref{weakConvMinSequence} yields
\begin{align*}\langle DF_\sigma(u_*,U^*),(u_n-u_*,U^n-U^*)\rangle\to 0.\end{align*}
Thus $d\geq F_\sigma(u_*, U^*)\geq d$ which shows that $(u_*,U^*)$ is a global minimizer. Strict convexity of $F_\sigma$ implies it is unique. At the minimum $(u_*,U^*)$ the Euler-Lagrange equations \eqref{weakformulationEquivalence} are satisfied. An application of Proposition \ref{carification} part (ii) shows that $(u_*,U^*)$ is a weak solution to the forward problem.

(ii) Proposition \ref{carification} part (i) shows that $(u_*,U^*)$ solves the Euler-Lagrange equations, and due to the convexity it is a minimizer of $F_\sigma$. Due to the strict convexity of the functional the minimizer is unique, hence the weak solution is unique.

\end{proof}
\section{On the regularity up to the boundary for CEM boundary conditions}

If $\sigma\in C^\alpha(\Omega)$, then interior elliptic regularity
yields $u\in C^{1,\alpha}(\Omega)$. The following result considers the regularity
up to the boundary; part a) in the proposition below was already proved in \cite[Remark 1]{DHHS}. We reproduce
the proof for the reader's convenience.
Let $E:=\bigcup_{k=0}^N e_k\subset\partial\Omega\displaystyle$ be the union set of the electrodes.

\begin{proposition}\label{regularity}
Let $\Omega\subset \mathbb{R}^n$ be a $C^2$-domain, and
let $\sigma\in C^\alpha(\overline\Omega)$ be $C^2$-smooth near the boundary.
Assume that the electrodes $e_k$ has Lipschitz boundary, and $z_k\in C^2(e_k)$, $k=0,...,N$.
Let $(u,U)\in H^1(\Omega)\times\Pi$  be the solution to the forward problem.

Then

a) $u\in H^{2-\epsilon}(\Omega)$, for all $\epsilon>0$.

 b) For any $x_0\in\partial\Omega\setminus \partial{E}$, there is a neighborhood $V_0\subset \overline\Omega$ of $x_0$,
 and a function $v_0\in H^{3-\epsilon}(\Omega)$ for all $\epsilon>0$, such that $u|_{V_0}=v|_{V_0}$.
\end{proposition}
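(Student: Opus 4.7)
The plan is to localize via a smooth partition of unity subordinate to a cover of $\overline{\Omega}$ that distinguishes four types of patches: interior patches; boundary patches lying entirely in a single electrode $e_k$ (pure Robin condition with $C^2$ coefficients); boundary patches lying entirely in $\partial\Omega\setminus\overline{E}$ (pure Neumann condition with $C^2$ coefficient $\sigma$); and boundary patches straddling some point of $\partial E$. Interior regularity and the first three types give full $H^2$ (or better) local bounds by classical elliptic theory. The regularity ceiling that forces the $\epsilon$-loss in (a) comes only from the fourth type of patch.

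For part (a), on a patch straddling $\partial E$ I would flatten $\partial\Omega$ by a $C^2$ chart so that the transition of boundary conditions becomes a flat interface in a half-space, reducing matters to a mixed Robin/Neumann boundary value problem for an elliptic operator with $C^2$ coefficients. Classical Shamir-type edge expansions for such mixed problems represent the solution as a smooth part plus finitely many singular modes of the form $r^{s}\psi(\theta)$ in polar coordinates centred on the interface, with characteristic exponents $s$ above a threshold that places these modes in $H^{2-\epsilon}$ for every $\epsilon>0$ but not in $H^2$. Patching the local estimates together via the partition of unity yields $u\in H^{2-\epsilon}(\Omega)$, which is the argument of \cite[Remark 1]{DHHS} invoked in the statement.

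For part (b), fix $x_0\in\partial\Omega\setminus\partial E$ and choose a ball $B_0\subset\mathbb{R}^n$ centred at $x_0$ with $B_0\cap\partial E=\emptyset$; set $V_0:=B_0\cap\overline{\Omega}$ and pick a smooth cutoff $\chi$ supported in $B_0$ with $\chi\equiv 1$ on a smaller concentric ball $V_0'$. The localized function $v_0:=\chi u$ lies in $H^{2-\epsilon}(\Omega)$ by part (a), agrees with $u$ on $V_0'$, and satisfies
\[
\nabla\cdot(\sigma\nabla v_0)=2\sigma\nabla\chi\cdot\nabla u+u\,\nabla\cdot(\sigma\nabla\chi)\in H^{1-\epsilon}(\Omega),
\]
subject to either the pure Neumann condition $\sigma\partial_\nu v_0=\sigma u\,\partial_\nu\chi$ (if $x_0\notin\overline{E}$) or the pure Robin condition $v_0+z_k\sigma\partial_\nu v_0=\chi U_k+z_k\sigma u\,\partial_\nu\chi$ (if $x_0\in e_k$), on $V_0\cap\partial\Omega$. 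On $V_0$ the coefficients of both the equation and the boundary operator are $C^2$-smooth, the boundary is $C^2$, and the data have trace regularity $H^{3/2-\epsilon}$ (using $u\in H^{2-\epsilon}$); standard regularity theory for smooth elliptic problems with Robin or Neumann conditions then yields $v_0\in H^{3-\epsilon}(\Omega)$, which is the desired function.

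The main obstacle is the singular-expansion analysis at $\partial E$ in part (a): the jump from the Robin condition to the Neumann condition across $\partial E$ is precisely what prevents full $H^2$ regularity and forces the $\epsilon$-loss. Part (b) is then routine, because on the chosen $V_0$ the boundary condition is uniformly of a single type with $C^2$ coefficients, so classical bootstrapping applied to $v_0=\chi u$ does the job.
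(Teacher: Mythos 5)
Your part (b) is essentially the paper's argument (localize with a cutoff, observe that the right-hand side is in $H^{1-\epsilon}$ and the boundary data in $H^{3/2-\epsilon}$ by part (a), then apply one-type boundary regularity), so the issue is entirely with part (a).

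There the crucial quantitative step is asserted rather than proved. You invoke ``Shamir-type edge expansions'' and claim the characteristic exponents at the Robin/Neumann interface land exactly so that the singular modes lie in $H^{2-\epsilon}$ for every $\epsilon>0$ but not in $H^2$. That is the entire content of the proposition, and it is not what the standard mixed-problem theory gives off the shelf: for the classical Zaremba (Dirichlet/Neumann) transition the leading mode is $r^{1/2}\psi(\theta)$ and one only gets $H^{3/2-\epsilon}$. The reason the Robin-to-Neumann transition is better is that the Robin condition differs from the Neumann condition only by a zeroth-order term, so the two conditions have the same principal part and the ``edge'' is invisible at leading symbol level; you never say this, and without it your threshold claim is unsupported. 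There is also a hypothesis mismatch: the electrodes are only assumed to have \emph{Lipschitz} boundaries, whereas edge-expansion machinery of the kind you cite requires a smooth (or at least much more regular) edge $\partial E$, and in dimension $n\ge 3$ the edge is an $(n-2)$-dimensional Lipschitz set for which such expansions are not available. The paper avoids all of this with a much shorter argument: from the Robin condition and $u|_{\partial\Omega}\in H^{1/2}(\partial\Omega)$ one reads off $\partial u/\partial\nu = (U_k-u)/(z_k\sigma)\in H^{1/2}(e_k)$ on each electrode; since the Neumann data vanishes off $E$, the full Neumann datum is the extension by zero of an $H^{1/2}(E)$ function, which lies in $H^{1/2-\epsilon}(\partial\Omega)$ for every $\epsilon>0$ (Lions--Magenes, valid for Lipschitz $\partial e_k$); then global Neumann regularity gives $u\in H^{2-\epsilon}(\Omega)$. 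The $\epsilon$-loss is thus located in the extension-by-zero operator, not in an edge-singularity computation. To repair your proof you would either need to actually compute the indicial exponents for the Robin/Neumann transmission problem and justify the expansion under the stated Lipschitz hypothesis, or switch to the extension-by-zero bootstrap.
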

\begin{proof}

a) Since $u\in H^1(\Omega)$ it follows from \eqref{robin_kForward} that $z_k\sigma\frac{\partial}{\partial \nu}u\in H^{1/2}(E)$. Since $1/(z_k\sigma)\in C^2(E)$, we have $\frac{\partial}{\partial \nu}u\in H^{1/2}(E)$. By \cite[Theorem 11.4]{lionsMagenes} the extension by zero to the whole boundary yields $\frac{\partial}{\partial \nu}u\in H^{1/2-\epsilon}(\partial\Omega)$,  and thus
$\sigma\frac{\partial}{\partial \nu}u\in H^{1/2-\epsilon}(\partial\Omega)$. Now apply
the elliptic regularity for the Neumann problem \cite[Remark 7.2]{lionsMagenes} to conclude
$u\in H^{2-\epsilon}(\Omega)$, for all $\epsilon>0$.

b) Let $x_0\in  E$, and choose $r_0>0$ be sufficiently small so that $\sigma$ is $C^2$-smooth in $\{x\in\overline\Omega:~|x-x_0| < 2r_0\}$ and
$\{x\in\partial\Omega:~|x-x_0| < 2r_0\}\subset E$.

Let $V_0:=\{x\in\overline\Omega:~|x-x_0| \leq r_0/2\}$. We define
$$v_0(x)=\chi_0(x) u(x),\quad x\in\overline\Omega,$$
where $\chi_0\in C^\infty(\overline\Omega)$ is the cutoff function with $\chi_0(x)= 1$, if $x\in V_0$, and $\chi_0(x)=0$ if  $\{x\in\overline\Omega:~|x-x_0| \geq r_0\}$.
Then, by part a) we have $$\nabla\cdot\sigma\nabla v_0=u\nabla\cdot\sigma\nabla\chi_0+2\sigma\nabla \chi_0\cdot\nabla u\in H^{1-\epsilon}(\Omega),$$for all $0<\epsilon\leq 1$.

Also by part a) we have that the trace $u|_{\partial\Omega}\in H^{3/2-\epsilon}(\partial\Omega)$. Now, by  \eqref{robin_kForward},
\begin{align*}
z_k\sigma\frac{\partial v_0}{\partial\nu}&=
\chi_0z_k\sigma\frac{\partial u}{\partial\nu}+z_k\sigma u\frac{\partial\chi_0}{\partial\nu}\\
&=\chi_0(U_k-u) +z_k\sigma u\frac{\partial\chi_0}{\partial\nu}\in H^{3/2-\epsilon}(\partial\Omega),\end{align*}
and thus, $\sigma\frac{\partial v_0}{\partial\nu}\in H^{3/2-\epsilon}(\partial\Omega).$
Now apply the elliptic regularity \cite[Theorem 7.4, Remark 7.2]{lionsMagenes} to conclude $v_0\in H^{3-\epsilon}(\Omega)$.

If $x_0\in\partial\Omega\setminus\overline{E}$, the same proof holds
if we choose $r_0>0$ such that $\{x\in\partial\Omega:~|x-x_0| < 2r_0\}\cap \overline{E}=\emptyset$.
\end{proof}

\section{The connected components of almost all level sets reach the boundary}\label{reaching_the_boundary}
Let $t$ be one of the values for which the level set $\{u(x)=t\}$ is a $C^1$- smooth hypersurface (which is the case for a.e. $t$), and $\Sigma$ be one of its connected components. We show here that $\Sigma\cap\partial\Omega\neq\emptyset$. The arguments in the proof of \cite[Theorem 1.3]{NTT09} use only the interior points of $\Omega$, and thus apply to the CEM as well. We include them here for the convenience of the reader.

Arguing by contradiction, assume that $\Sigma\bigcap\partial\Omega=\emptyset$. Then
$\partial\Omega\bigcup\Sigma$ is a compact manifold with two
connected components. Using the Alexander duality theorem in
algebraic topology for $\partial\Omega\bigcup\Sigma$ (see, e.g.
Theorem 27.10 in \cite{greenbergHarper},) we have that
$R^n\setminus(\partial\Omega\bigcup\Sigma)$ is partitioned into
three open connected components:
$(R^n\setminus\overline{\Omega})\bigcup O_1\bigcup O_2$. Since
$\Sigma\subset\Omega$ we have $O_1\bigcup
O_2=\Omega\setminus\Sigma$ and then $\partial O_i\subset
\partial\Omega\bigcup\Sigma$ for $i=1,2$.

We claim that at least one of the $\partial O_1$ or $\partial O_2$
is in $\Sigma$. Assume not, i.e. for each $i=1,2$, $\partial
O_i\bigcap\partial\Omega\neq\emptyset$. Since $\partial\Omega$ is
connected (by assumption), we have that $O_1\bigcup
O_2\bigcup\partial\Omega$ is connected which implies $O_1\bigcup
O_2\bigcup(R^n\setminus\Omega)$ is also connected. By applying
once again Alexander's duality theorem for $\Sigma\subset R^n$,
we have that $R^n\setminus \Sigma$ has exactly two open
connected components, one of which is unbounded: $R^n\setminus
\Sigma =O_{\infty}\bigcup O_0$. Since $O_1\bigcup O_2\bigcup
(R^n\setminus\Omega)$ is connected and unbounded, we have
$O_1\bigcup O_2\bigcup (R^n\setminus\Omega)\subset O_\infty$,
which leaves $O_0\subset R^n\setminus (O_1\bigcup O_2\bigcup
(R^n\setminus\Omega))\subset \Sigma$. This is impossible since
$O_0$ is open and $\Sigma$ is a hypersurface. Therefore either
$O_1$ or $O_2$ or both has the boundary in $\Sigma$.

To fix ideas, consider $\partial O_1\subset\Sigma$. If this were
the case, then we claim that $u\equiv t$ in $O_1$. Indeed, since
$O_1$ is an extension domain ($\partial O_1$ has a unit normal
everywhere) the new map $\tilde u$ defined by
\begin{equation*}
\tilde u(x)=\left\{
\begin{array}{ll}
u(x),& x\in\Omega\setminus O_1,\\
t,& x \in \overline{O_1},
\end{array}
\right.
\end{equation*}
is in $H^1(\Omega)\bigcap C(\overline\Omega)$ and decreases
the functional \eqref{1LaplFunctional}, thus contradicting the
minimizing property of $u$. Therefore $u\equiv t$ in $O_1$, which makes
$|\nabla u|\equiv 0$ in $O_1$. Again we reach a contradiction
since the set of critical points of $u$ is negligible.

These contradictions followed from the assumption that $\Sigma
\bigcap\partial\Omega=\emptyset$, and therefore $\Sigma
\bigcap\partial\Omega\neq\emptyset$

\end{document}